\def\marginpar#1{\ignorespaces}
\newcommand{\pr}{{\bf\sf P}}    
\newcommand{\ex}{{\bf\sf E}}              
\newcommand{\var}{{\bf\sf Var}}
\newcommand{\Dir}{\mbox{Dir}}    
\newcommand{\Beta}{\mbox{Beta}}   
\newcommand{\al}{\alpha}    
\def\pol{\texttt{Poly}(\alpha)}
\newtheorem{theorem}{Theorem}
\newtheorem{lemma}[theorem]{Lemma}
\newtheorem{proposition}[theorem]{Proposition}
\newtheorem{corollary}[theorem]{Corollary}
\numberwithin{equation}{section}
\newcommand{\beq}{\begin{equation}}
\newcommand{\eeq}{\end{equation}}
\newcommand{\bal}{\begin{align}}
\newcommand{\eal}{\end{align}}
\newcommand{\bals}{\begin{align*}}
\newcommand{\eals}{\end{align*}}
\begin{document}
%%%%%%%%%%%%%%%%
\title[Polynomial Voting Rules]{Polynomial Voting Rules}

\author[Wenpin Tang]{{Wenpin} Tang}
\address{Department of Industrial Engineer and Operations Research, Columbia University. %Email: 
} \email{wt2319@columbia.edu}

\author[David Yao]{David D.\ Yao}
\address{Department of Industrial Engineer and Operations Research, Columbia University. %Email: 
} \email{yao@columbia.edu}

\date{\today} 
\begin{abstract}
We propose and study a new class of polynomial voting rules for a general
decentralized decision/consensus system, and more specifically for
the PoS (Proof of Stake) protocol. 
The main idea, inspired by the Penrose square-root law and the
more recent quadratic voting rule, 
is to differentiate a voter's voting power and the voter's share 
(fraction of the total in the system).
We show that while voter shares form a martingale process that converge to a Dirichlet distribution,
their voting powers follow a super-martingale process that decays to zero over time.
This prevents any voter from controlling the voting process, 
and thus enhances security. %decentralization.
For both limiting results, we also provide explicit rates of convergence.
When the initial total volume of votes (or stakes) is large, 
we show a phase transition in share stability (or the lack thereof),
corresponding to the voter's initial share relative to the total.
We also study the scenario in which trading (of votes/stakes) among the voters is allowed, 
and quantify the level of risk sensitivity (or risk averse) in three categories, corresponding to the
voter's utility being a super-martingale, a sub-martingale, and a martingale. For each category, 
we identify the voter's best strategy in terms of participation and trading.
\end{abstract}
\maketitle

\textit{Key words}: Cryptocurrency, economic incentive, fluid limit, phase transition, polynomial voting rules, Proof of Stake protocol, stability, urn models.

\textit{AMS 2020 Mathematics Subject Classification}: 60C05, 60F05, 60G42, 91B08.

\setcounter{tocdepth}{1}
%---------------------------------------------------------------------------------------------------------------------------------------------------------------------------------------------------------------
\section{Introduction}

\quad Voting, in the traditional sense, refers to a set of rules for a community of individuals or groups
(``voters'') to reach an agreement, or to make a collective decision on some choices and ranking problems.
%The entities to be selected are called candidates,
%and the group of people who cast a ballot for their candidate are called voters.
In today's world, ``voting'' has become a ubiquitous notion that includes any decentralized 
decision-making protocol or system, where the voters are often abstract entities (``virtual''), 
and the voting process automated;    
% is abused, and is inherently attached to 
 %distributed computing and multi-agent systems where a set of virtual voters to
 and the purpose of reaching consensus is often non-social and non-political, such as 
 to enhance the overall security of an industrial operation or infrastructure (\cite{G82, LSP82}).
Examples include cloud computing (\cite{AF09, DG08}),
smart power grids (\cite{Huang09}),
and more recently, trading or payment platforms and exchanges built 
upon the blockchain technology (\cite{Naka08, Wood14}).

\quad At the core of a blockchain is the {\em consensus protocol}, 
which specifies a set of voting rules for the participants (``miners'' or validators) 
to agree on an ever-growing log of transactions (the ``longest chain'') so as to form
a distributed ledger.
There are several existing blockchain protocols, 
among which the most popular are {Proof of Work} (PoW, \cite{Naka08}) 
and {Proof of Stake} (PoS, \cite{KN12, Wood14}). 
In the PoW protocol, % (including Bitcoin, Ethereum and Dogecoin), 
miners compete with each other by solving a hashing puzzle. 
The miner who solves the puzzle first receives a reward (a number of coins) and whose work
validates a new block's addition to the blockchain. 
Hence, while the competition is open to everyone,
the chance of winning is proportional to a miner's computing power.

\quad In the PoS protocol, there is a bidding mechanism to select a miner
to do the work of validating a new block.  %(including BNB, Cardano and Solana), 
Participants who choose to join the bidding are required to commit some ``stakes''
(coins they own), and the winning probability is proportional to the number of stakes committed.
%It randomly select a participant where the probability being drawn is proportional to the stake that the participant owns.
%In other words, participants in the PoS protocol bid their stakes to validating a new block.
%In order to distinguish from the miners in the PoW protocol, 
Hence, a participant in a PoS blockchain is actually a ``bidder,'' as opposed to a ``miner'' 
--- only the winning bidder becomes the miner validating the block.
(Any participant who chooses not to join the bidding can be viewed as 
a bidder who commits zero stakes.)
Needless to add, bidding exists long before the PoS protocol, and has been widely used in 
many applications, such as auctions and initial public offerings (IPOs).

\quad Let's explore the PoS bidding mechanism a bit more formally.
Suppose a voter (or bidder) $k$ is in possession of $n_{k,t}$ votes (or stakes) 
at time $t$, an index that counts the rounds of voting or bidding in the protocol;
and $N_t:=\sum_k n_{k,t}$ is the total number of 
votes over all voters. Hence, voter $k$'s share, fraction of the total, is $\pi_{k,t}:=n_{k,t}/N_t$. 
Following a traditional voting rule, voter $k$'s chance or probability of winning, which we call 
{\it voting power}, will be equal to $\pi_{k,t}$, voter $k$'s share. 
Yet, this doesn't have to be the case. That is, 
any voter's voting power needs not be equal to the voter's share (of the system total).
Indeed, there are often good reasons for the two to be different. 

\quad Historically, the English scholar Lionel Penrose famously proposed 
a square-root voting rule (\cite{Pen46}),
around the time when the United Nations was founded shortly after WWII. 
According to Penrose, a world assembly such as the UN should designate 
each country a number of votes that is proportional to the square root of its population. 
The obvious implication (which may or may not be what Penrose initially intended)
is to limit the voting power of nations with very large populations.
In the same spirit, the quadratic voting rule has attracted much attention in recent years
(\cite{LW18}).
The idea is that each voter be given a budget (in dollars, for instance); 
the voter can cast
multiple votes on any single or subset of choices or candidates on the ballot, 
with $x$ votes (for any choice) costing $x^2$ dollars.
Under both voting rules, the voting power is different from the voter's 
share or representation in the system,
population in the first case, and the budget in the second case.

\quad Inspired by these ideas, we propose a class of polynomial voting rules, 
denoted $\pol$, which grant every voter $k$
a voting power that scales the voter's share $\pi_{k,t}$ by a factor $N_t^{-\al}$ for $\al \ge 0$. 
When $\al=0$, this reduces to the traditional voting case of power$=$share, which is a linear rule.
When $\al=1$, the rule resembles the square-root or the quadratic voting rules mentioned above in 
spirit, in terms of decoupling voting power from a voter's share,   
but of course differs in both the application context and implementation schemes.
% from the ones in \cite{Pen46, LW18}.
%similar in spirit to 
%rules mentioned above.
As we will demonstrate, the general $\pol$ rule is a time change of the $\texttt{Poly}(0)$ rule,
with the parameter $\alpha$ measuring how much the traditional $\alpha =0$ rule is ``slowed down,''
namely, the voting power is diminished over time.

\quad There are (at least) two reasons to consider slowed-down voting schemes in blockchains.
\begin{itemize}[itemsep = 3 pt]
\item
First, the block-generation time requires to be lower bounded due to network delay (see \cite[Section 14.3]{Shibook}).
Specifically, there is the principle of security:
\begin{equation}
\label{security}
(1 - v) \cdot \mbox{honest power} > \gamma \cdot \mbox{dishonest power}, \quad \mbox{or} \quad  v < 1 -  \frac{\gamma \cdot \mbox{dishonest power}}{\mbox{honest power}}.
\end{equation}
Here ``honest/dishonest power'' refers to the voting power of honest/dishonest bidders.  
The parameter $\gamma$ is a user-defined {\em security factor};
e.g., $\gamma = 2$ means, honest power is expected to be twice as much as dishonest power;
hence, $\gamma$ measures how secure a distributed system is.
%The parameter $v$ is proportional to network delay and voting power of honest participants.
When honest bidders broadcast their validation results, 
dishonest bidders may exploit network delay to attack;
equivalently, network delay will reduce the honest power.
Thus the term ``$1-v$'' plays the role of a discount factor, with 
$v$ proportional to network delay
(the more severe network delay is, the smaller the discount $1-v$ is, and hence the larger $v$ is).
Honest power is discounted also because honest bidders follow exactly the protocol
while dishonest bidders do not comply with the rules. 
As we will illustrate (in the remarks following Theorem \ref{thm:1})
slowing down the voting process enhances security.
This is because decreasing voting power over time will increase the block generation time,
which will mitigate network delay
and make the principle of security \eqref{security} ``easier" to hold.
%since  make $v\to 0$.
% (Also refer to the remarks following Theorem \ref{thm:1} at the end of \S\ref{sc22}.)
\item
Second, PoS blockchains suffer from malicious attacks known as {\em Nothing at Stake} (see e.g. \cite{DPP19}). 
As pointed out in \cite{BD19},  
for the PoS longest-chain protocol, 
honest bidders focus exclusively on the longest chain
while dishonest bidders can work simultaneously on all existing blocks. 
They showed that the PoS longest-chain is less secure than its PoW counterpart,
assuming both honest and dishonest parties have constant voting power over time. 
However, as dishonest bidders have more flexibility, 
it is (much) more likely that they win and get rewarded, 
and their advantage is only amplified over time. 
This makes ``constant voting power" highly undesirable. 
There are two general approaches to solving this problem: 
(i) adjust the amount of reward over time, (ii) slow down the voting process; 
both are aimed at preventing
dishonest bidders from overpowering honest bidders as time evolves.
\end{itemize}

\quad Here is an overview of our main findings and results. 
We prove that under the $\pol$ voting rule,
voter shares form a martingale process that converges to a Dirichlet distribution as $t\to\infty$,  
while their voting powers follow a super-martingale process that decreases to zero 
over time (Theorem \ref{thm:1}); and for both limits we also explicitly characterize their rates of convergence.
Thus, the $\pol$ voting scheme enhances {security},
%decentralization, 
preventing any voter or any group of voters from controlling the voting process
and overpowering the system.

\quad We further group the voters into two categories: {large} and {small}, according
to the initial (time zero) votes they own relative to the total ($N_0$).
When $N_0$ is large, which is the case in most applications,
we show a phase transition in the stability of voter shares across the two categories
 (Proposition \ref{thm:2}).
Notably, the same phenomenon is demonstrated under the traditional voting rule ($\al=0$),
refer to \cite{RS21,Tang22}.
% where the analysis is based on the well-known P\'olya urn model.
 % with a constant reward, or a time-decreasing reward bounded from below.
Our result establishes that this phase transition is in fact {\it universal}, 
in the sense that it applies to all values of $\al (\ge 0)$. 

%In view of growing popularity in cryptocurrency, 
%We also consider the scenario where trading between bidders is allowed. 
\quad We also study the scenario in which trading (of votes/stakes) among the voters (or ``bidders'') is allowed, 
motivated by PoS applications in cryptocurrency. 
For $\al=0$, the trading scenario has been recently studied in \cite{RS21}.
Not only our model is more general, in allowing any $\al\ge 0$, 
our results are also richer and sharper  (Theorem \ref{thm:3}).
For instance, we quantify the level of risk sensitivity (or risk-averse) that results in three cases
according to the voter's utility being a super-martingale, a sub-martingale or a martingale.  
Each case will lead to a best strategy for the voter, including ``non-participation'' (not to participate at all in the bidding) 
and ``buy out'' (buying as many stakes as what is available), which are not considered in \cite{RS21}.
Note that ``buy-out" is a monopoly, 
and it is desirable to limit the number of stakes that any voter can acquire in a single round.
This is studied in our subsequent paper \cite{TY23} on trading PoS stakes with volume constraint.
See also \cite{Tang23} for various problems (including transaction costs and voter's collective behavior)
related to the PoS trading.
%Yet, this is important, since
%bidder $k$, if sufficiently risk-averse, may choose to cash out all initially owned stakes at $t=0$ and
%not join the bidding at all. On the other hand, 
%If the choice is to participate in the bidding process, 
%$k$ can still choose not to trade any stake initially owned or earned from the bidding.
%For $\al=0$, the trading scenario has been recently studied in \cite{RS21}.
%Not only our model is more general, in allowing any $\al\ge 0$, 
%our results are also sharper  (Theorem \ref{thm:3}), in spelling out the 
%the conditions under which the non-participation or the no-trading strategy is preferred;
% provenly as good as, or strictly better than, any trading strategy; 
%whereas the model in \cite{RS21} can only give the condition under which
%all feasible strategies, no-trading included, yield the same (expected) utility. 

%Formulating a multi-period consumption problem,
%We prove that if a voter is considerably risk averse then he has no incentive to trade with others (Theorem \ref{thm:3}).

\quad The key to our analysis relies on the study of the random process $N_t$ 
(the total volume of votes/stakes at time $t$), 
which is a time-homogeneous Markov chain.
We develop some asymptotic results for this Markov chain, including large-deviation bounds 
(Theorem \ref{prop:2})
and a fluid limit
(Proposition \ref{prop:1}).
% which may be of independent interest.
%Our paper thus contributes to the literature on the design of permissionless blockchains,
%that on the economics of the PoS protocol, and that on stochastic processes.

\medskip

\quad In the remainder of this paper there are two main sections.
 Section \ref{sc2} studies the $\pol$ voting model from a general perspective,
 focusing on the associated stochastic processes, such as $N_t$, 
 voter shares, and voting powers; their long-term behavior and limits, 
 some of which are further characterized by concentration inequalities or large-deviation bounds.
 Section \ref{sc3} concerns two aspects of the $\pol$ voting rule that are more 
 closely associated with the application of PoS in cryptocurrency: 
 (a) the evolution of bidder shares over time and
 the phase transition phenomenon mentioned above;
  %in terms among the three categories of bidders---large, medium and small--- 
  (b) the issue of incentive and risk-sensitivity when trading is allowed.
  %  in the PoS protocol under the $\pol$ voting rules.
%The more technical proofs in the two sections are deferred to Section \ref{sc4}.
Concluding remarks and suggestions for further research are collected 
in Section \ref{sc5}.

%---------------------------------------------------------------------------------------------------------------------------------------------------------------------------------------------------------------
\section{The $\texttt{Poly}(\alpha)$ Voting Model}
\label{sc2}

\quad In this section, we develop a formal model 
for the $\pol$ voting rule,
% new class of {polynomial voting rules} in a decentralized system.
%These voting rules include both the classical P\'olya urn model and
 %the quadratic/square-root voting rule as particular instances. 
focusing on the stochastic processes associated with the model, 
their properties and limiting behavior.

\quad First, here is a list of some of the common notation used throughout the paper.
\begin{itemize}[itemsep = 3 pt]
\item
$\mathbb{N}_{+}$ denotes the set of positive integers, and $\mathbb{R}$ denotes the set of real numbers.
\item
%The symbol 
$\stackrel{d}{=}$ denotes equal in distribution, and $\stackrel{d}{\longrightarrow}$ denotes convergence in distribution.
%\item
%$a \wedge b$ denotes the smaller number of $a$ and $b$.
\item
%The symbol 
$a = \mathcal{O}(b)$ means $\frac{a}{b}$ is bounded from above as $b \to \infty$;
%the symbol 
$a = \Theta(b)$ means $\frac{a}{b}$ is bounded from below and above as $b \to \infty$;
and %the symbol 
$a = o(b)$ or $b \gg a$ means $\frac{a}{b}$ decays towards zero as $b \to \infty$.
\item
%Given two probability distributions $\mu, \nu$ on $\mathbb{R}$, denote 
$d_W(\mu, \nu)$ denotes the $1$-Wasserstein distance between two probability distributions $\mu$ and $\nu$. Refer to \cite[Chapter 6]{Villani09}.
\end{itemize}
We use $C, C' , C''$ etc to denote generic constants (which may change from line to line).

\quad The voters, referred to as {\it bidders} below, are the participants in the decentralized system, 
where they engage in rounds of bidding following a pre-specified voting rule (the ``consensus
protocol'') so as to win more votes, or {\it stakes}. 
(The PoS protocol described in the Introduction provides a concrete instance to motivate the
model here.)
Let $K \in \mathbb{N}_{+}$ be the total number of bidders, which will stay fixed throughout the paper;    
and let $[K]: = \{1, \ldots, K\}$ denote the set of all bidders.

\quad Time is discrete, indexed by $t=0,1,2, \dots$, and corresponds to the rounds of bidding
mentioned above.
Bidder $k$ initially owns  $n_{k,0}$ stakes.
Let $N:=\sum_{k = 1}^K n_{k,0}$ denote the total number of initial stakes owned by all $K$ bidders.
The term {\em bidder share} refers to the fraction of stakes each bidder owns.
So the initial bidder shares $(\pi_{k, 0}, \, k \in [K])$ are given by
\begin{equation}
\label{eq:share0}
\pi_{k, 0}: = \frac{n_{k,0}}{N}, \quad k \in [K].
\end{equation}
Similarly, $n_{k,t}$ denotes the number of stakes owned by bidder $k$ at time $t \in \mathbb{N}_{+}$, 
and the corresponding share is
\begin{equation}
\label{eq:sharet}
\pi_{k,t}:= \frac{n_{k,t}}{N_t}, \quad k \in [K], \quad \mbox{with } N_t:= \sum_{k=1}^K n_{k,t}.
\end{equation}
Here $N_t$ is the total number of stakes at time $t$,  and thus $N_0 = N$.
(We shall often refer to $N_t$ as the ``volume of stakes'' or, simply, ``volume.'')
Clearly, for each $t \ge 0$, $(\pi_{k, t},\, k \in [K])$ forms a probability distribution on $[K]$.

\quad In each period $t$, 
a single stake (or ``reward'') is distributed as follows:
each bidder $k$ receives the reward with probability 
\begin{equation}
\label{eq:powert}
\theta_{k,t} := \frac{n_{k,t}}{N_t^{1+\alpha}}= \frac{\pi_{k,t}}{N_t^{\alpha}}  , 
\end{equation}
and receives nothing with probability $1 - \theta_{k,t}$.
Clearly, $\theta_{k,t}$ is bidder $k$'s {\em reward rate}, 
as $1/\theta_{k,t}$ is the average number of rounds for bidder $k$ to win an additional unit of stake. % (at time $t$.
To the extent the reward is coupled with the voting mechanism outlined above, % validation of a new block, 
$\theta_{k,t}$ can also be viewed as bidder $k$'s voting power at time $t$.
(Below, we use the terms ``reward rate'' and ``voting power'' interchangeably if there is no ambiguity.)
When $\alpha =0$, 
%this voting rule 
%The model differs
the voting power $\theta_{k,t}$ coincides with the bidder share $\pi_{k,t}$,
which is the P\'olya urn framework in \cite{RS21},  and in \cite{Tang22}.
 %On the other hand, when $\alpha=1$, the $\texttt{Poly}(\alpha)$ model 
 %shares the same spirit of the Penrose square-root rule and 
% \blue{specifies to} the quadratic voting rule motivated in the Introduction.

\quad Let $S_{k,t}$ be the random event that bidder $k$ receives one unit of reward in period $t$.
Thus, the number of stakes owned by each bidder evolves as follows,
\begin{equation}
\label{eq:TP1}
n_{k,t} = n_{k, t-1} + 1_{S_{k,t}}, \quad k \in [K] ;
\end{equation}
or simply,
\begin{equation}
\label{eq:TP2}
n_{k,t} = \left\{ \begin{array}{lcl}
n_{k,t-1} & \mbox{with probability}
& 1 - \theta_{k,t-1}, \\
n_{k,t-1} + 1 & \mbox{with probability} & \theta_{k,t-1}.
\end{array}\right.
\end{equation}
%Note that the volume of stakes satisfies $N_t = N_{t-1} + 1_{\cup_{k = 1}^K S_{k,t}}$, whose dynamics is governed by
Accordingly, the total number of stakes $N_t$ evolves as follows, taking into account  
$\sum_{k = 1}^K n_{k,t} = N_t$,
\begin{equation}
\label{eq:Nt}
N_t = \left\{ \begin{array}{lcl}
N_{t-1} & \mbox{with probability}
& 1 - 1/N_{t-1}^{\al}, \\
N_{t-1} + 1 & \mbox{with probability} & 1/N_{t-1}^{\al}.
\end{array}\right.
\end{equation}
The counting process $(N_t, \, t \ge 0)$ specified in \eqref{eq:Nt}  evolves as a time-homogeneous Markov chain on $\{N, N+1, \ldots\}$,
in contrast with the P\'olya urn (with a constant reward) in which $N_t$ grows deterministically and linearly in $t$.
%the volume  stakes in the $\texttt{Poly}(\alpha)$ urn model defined by here $N_t$ as is a nondecreasing .
As we will see in the next subsection, 
the $\pol$ voting rule slows down the distribution of rewards,
so the volume of stakes grows sublinearly. 
This is consistent with the volume growth in many cryptocurrencies such as Bitcoin and Ethereum.

\quad Let $\pmb{n}_t = (n_{1,t}, \ldots, n_{K,t})$ be the vector of bidder stakes at time $t$. 
An alternative (and useful) characterization of $(\pmb{n}_t, \, t \ge 0)$ is given as follows. 
\begin{proposition}
\label{prop:embed}
%Let $\pmb{n}_t = (n_{1,t}, \ldots, n_{K,t})$ be the vector of bidders' stakes at time $t$.
Let $(L_t, \, t \ge 0)$ be a counting process with arrivals occurring at $0 = T_0 < T_1 < \cdots$, such that
the inter-arrival times are independent, with $T_{k+1} - T_k$, for every $k\ge 0$, following a geometric distribution with success probability parameter $(N+k)^{-\alpha}$.
% as $\mbox{Geo}((N+k)^{-\alpha})$.
Define the process $(\pmb{l}_t, \, t \ge 0)$ by
\begin{equation*}
\pmb{l}_t = \pmb{l}_{T_k} \quad \mbox{for } T_k \le t < T_{k+1},
\end{equation*}
where $(\pmb{l}_{T_k}, \, k \ge 0)$ is a copy of the P\'olya urn process with $K$ colors and $N$ initial balls.
Then, we have $(\pmb{n}_t, \, t \ge 0) \stackrel{d}{=} (\pmb{l}_t, \, t \ge 0)$, where $\pmb{n}_t$ is the process of bidder stakes defined above.
\end{proposition}
\begin{proof}
It is clear from the dynamics in \eqref{eq:Nt} that the two counting processes $(N_t, \, t \ge 0)$ and $(L_t, \, t \ge 0)$ have the same distribution. 
Given $\pmb{n}_t$, 
the probability that the next (unit of) stake goes to bidder $k$ is $\frac{n_{k,t}}{N_t^{1+\alpha}} / \frac{1}{N_t^{\alpha}} = \frac{n_{k,t}}{N_t}$ by the craps principle.
The connection to  the P\'olya urn process with $K$ colors (voters) and $N$ initial balls (stakes) is obvious.
\end{proof}
%This permits to conclude. 

\quad The above proposition implies that the P\'olya urn is embedded in the process of stakes $(\pmb{n}_t, \, t \ge 0)$ 
through a random time change $(N_t, \, t \ge 0)$. This fact will be used below to study the long-time behavior of 
bidder shares and reward rates in \S\ref{sc22}. 
But we first study in the next subsection how the issuance of rewards is slowed down under the $\texttt{Poly}(\alpha)$ voting rule.

\subsection{The volume $(N_t, \, t \ge 0)$}

Let $\mathcal{F}_t$ be the filtration generated by the random events $(S_{k, r}: k \in [K], \, r \le t)$.

\begin{proposition}[Long-time behavior of $N_t$]
\label{prop:Nt}
Under the $\texttt{Poly}(\alpha)$ voting rule, the following results hold:
\begin{itemize}
\item[(i)]
The process $(N_t, \, t \ge 0)$ is an $\mathcal{F}_t$-sub-martingale, and its compensator is 
\begin{equation*}
A_t = \sum_{k \le t - 1} N_k^{-\alpha} \quad \mbox{for } t \ge 1.
\end{equation*}
\item[(ii)]
There is the convergence in probability:
\begin{equation}
\label{eq:cvpN}
\frac{N_t^{1 + \al}}{t} \longrightarrow
%\longrightarrow 
1 + \alpha \quad \mbox{as } t \to \infty.
\end{equation}
\end{itemize}
\end{proposition}

\begin{proof}
(i) It suffices to note that $\ex (N_{t+1} \,|\, \mathcal{F}_t) = N_{t} + N_t^{-\alpha}$, for all $t \ge 0$.

(ii) Apply the method of moments by computing $\ex(N_t^{(1+\al)j})$ for all $j$. % = 1,2,\ldots$
For $j = 1$, we have by definition
\begin{align*}
\ex (N_{t+1}^{1 + \al} - N_t^{1 + \al} \,|\, N_t = x) & =  (1+x)^{1 + \al} \frac{1}{x^\al} + x^{1 + \al} \left(1 - \frac{1}{x^\al} \right) -x^{1 + \al} \\
& = 1 + \al + \mathcal{O}(x^{-1}) \quad \mbox{as } x \to \infty.
\end{align*}
%The discussion after Theorem \ref{thm:1} implies that 
It is clear that with probability one $N_t \to \infty$ as $t \to \infty$.
As a result, $\ex (N_{t+1}^{1 + \al} - N_t^{1 + \al}) \to 1+ \al$ as $t \to \infty$ which yields
\begin{equation}
\label{eq:moment1}
\ex N_t^{1+\al} \sim (1+ \al) t \quad \mbox{as } t \to \infty.
\end{equation}
Next for $j = 2$, we have
\begin{equation*}
\ex (N_{t+1}^{2(1+\al)} - N_t^{2(1+\al)} \,|\, N_t = x) = 2(1+\al)x^{1+\al} + \mathcal{O}(x^\al) \quad \mbox{as } x \to \infty.
\end{equation*}
Thus, 
$\ex (N_{t+1}^{2(1+\al)} - N_t^{2(1+\al)}) = \left(2(1+ \al) + o(1)\right) \ex N_t^{1+\al} \sim 2(1+\al)^2 t$ by \eqref{eq:moment1}.
Then we get $\ex(N_t^{2 (1+\al)}) \sim (1+\al)^2 t^2$ as $t \to \infty$.
We proceed by induction.
Assuming that $\ex(N_t^{j(1+\alpha)}) \sim (1+\alpha)^j t^j$ as $t \to \infty$,
we get
\begin{equation*}
\ex(N_t^{(1+\alpha)(j+1)} - N_t^{(1+\alpha)(j+1)}) = ((j+1)(1+\alpha) + o(1)) \ex(N_t^{j(1+\alpha)})
\sim (j+1) (1+\alpha)^{j+1} t^j,
\end{equation*}
which implies that $\ex(N_t^{(1+\alpha)(j+1)}) \sim (1+\alpha)^{j+1} t^{j+1}$ as $t \to \infty$.
Thus, we have
\begin{equation*}
\ex(N_t^{(1+\al)j}) \sim (1+\al)^j t^j \quad \mbox{as } t \to \infty, \quad j = 1, 2, \ldots
\end{equation*}
By the method of moments (see e.g. \cite[Section 30]{Bill95}), 
$N_t^{(1+\al)}/t$ converges in distribution, and thus in probability to $1+\al$.
\end{proof}

\quad The proposition gives the growth rate of the volume of stakes: 
$N_t$ grows as $((\al+1) t)^{\frac{1}{1+\al}}$ as $t \to \infty$. 
Part (i) suggests that $N_t \sim \sum_{k \le t-1} N_k^{-\alpha}$, which is consistent with the limit in \eqref{eq:cvpN}.
%which is of order $t^{\frac{1}{1+\alpha}}$ (equivalent to that of a P\'olya urn model with a decreasing reward of order $t^{-\frac{\al}{1+\al}}$).
%Thus, the larger the $\al$ value, fewer rewards are distributed over time.
%When $\alpha \to 0^+$, 
When $\alpha =0$, $N_t$ follows 
%the rate agrees with 
the (deterministic) linear growth of the P\'olya urn model (with a constant reward).
For $\al = 1$, $N_t$ grows as $\sqrt{t}$.

\quad Even more important is the question, how does 
$N_t$ ``fluctuate'' around its growth trajectory $((\al+1) t)^{\frac{1}{1+\al}}$;
specifically, how to establish large-deviation bounds on $N_t$?
This is addressed in the next theorem, along with a corollary that confirms $N_t$'s
concentration around its growth trajectory, for large $t$.

\begin{theorem}[Large deviations for $N_t$]
\label{prop:2}
Define a function $f_\al(\cdot)$,
$$f_\al: \lambda\in (0,\infty) \;\mapsto \;  (1 + \al)\lambda \log \lambda - (1+\al)\lambda + \frac{1}{\lambda^\al} \in \mathbb{R}.$$
Let $\lambda_{-}(\al) < \lambda_{+}(\al)$ be the two roots of $f_\al(\cdot)$ on $(-\infty, \infty)$.
Under the $\texttt{Poly}(\alpha)$  voting rule, the following results hold:
%\begin{enumerate}[itemsep = 3 pt]
\begin{itemize}
\item[(i)]
For each $\lambda < \lambda_{-}(\al)$, and for any $\varepsilon >0$,
\begin{equation}
\label{eq:lowerdb}
\pr(N_t < \lambda t^{\frac{1}{1+\al}}) \le \exp \left( - (1 - \varepsilon) f_\al(\lambda) \, t^{\frac{1}{1+\al}}\right) \quad \mbox{as } t \to \infty.
\end{equation}
\item[(ii)]
For each $\lambda > \lambda_{+}(\al)$, and for any $\varepsilon >0$,
\begin{equation}
\label{eq:upperdb}
\pr(N_t > \lambda t^{\frac{1}{1+\al}}) \le \exp \left( - (1 - \varepsilon) f_\al(\lambda) \, t^{\frac{1}{1+\al}}\right) \quad \mbox{as } t \to \infty.
\end{equation}
\end{itemize}
%\end{enumerate}
\end{theorem}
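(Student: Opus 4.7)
The plan is to reduce the tail estimates on $N_t$ to classical Cram\'er-type exponential estimates on a sum of independent geometric random variables, via hitting times. Define $\tau_n := \inf\{t \ge 0 : N_t \ge n\}$. Because $N_t$ is a nondecreasing birth chain whose jump probability depends only on the current state, the duality
\[
\{N_t < n\} = \{\tau_n > t\}, \qquad \{N_t > n\} = \{\tau_{n+1} \le t\}
\]
holds, and the increments $G_m := \tau_{m+1} - \tau_m$ are independent with $G_m \sim \operatorname{Geom}(1/m^\alpha)$. Hence $\tau_n - \tau_{N_0} = \sum_{m=N_0}^{n-1} G_m$ is a sum of independent (but not identically distributed) geometrics, which is amenable to exponential Markov inequalities.

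For part (i), set $n = \lceil \lambda t^{1/(1+\alpha)} \rceil$ and apply Chernoff: for $0 < \theta < -\log(1 - 1/n^\alpha)$,
\[
\pr(\tau_n > t) \le e^{-\theta t}\prod_{m=N_0}^{n-1}\mathbb{E}[e^{\theta G_m}], \qquad \mathbb{E}[e^{\theta G_m}] = \frac{m^{-\alpha} e^\theta}{1 - (1 - m^{-\alpha}) e^\theta}.
\]
The natural scaling is $\theta = s/n^\alpha$, under which the large-$m,n$ asymptotics give $\log \mathbb{E}[e^{sG_m/n^\alpha}] = -\log(1 - s(m/n)^\alpha) + o(1)$ uniformly, and a Riemann-sum approximation yields $\sum_{m=N_0}^{n-1}\log \mathbb{E}[e^{sG_m/n^\alpha}] = -n\int_0^1 \log(1 - s u^\alpha)\,du + o(n)$. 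Dividing by $t^{1/(1+\alpha)}$, the Chernoff exponent converges to the variational expression $-\lambda\bigl[\lambda^{-(1+\alpha)}s + \int_0^1 \log(1 - s u^\alpha)\,du\bigr]$, to be maximized over $s \in (0,1)$. The upper-tail case (part (ii)) is strictly parallel with $\theta = s/n^\alpha > 0$ and the signs of $s$ flipped: one obtains the bound $\pr(\tau_{n+1} \le t) \le e^{\theta t} \prod_m \mathbb{E}[e^{-\theta G_m}]$ and the variational expression $-\lambda\bigl[-\lambda^{-(1+\alpha)}s + \int_0^1 \log(1 + s u^\alpha)\,du\bigr]$ to be maximized over $s > 0$.

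The identification step is to show that the optimized exponent equals exactly $f_\alpha(\lambda) t^{1/(1+\alpha)}$. The stationarity condition $\int_0^1 u^\alpha/(1 \pm s u^\alpha)\,du = \lambda^{-(1+\alpha)}$ determines the maximizer $s^*$ implicitly (existence and uniqueness are immediate because the integrand is monotone in $s$, with limits $1/(1+\alpha)$ at $s=0$, consistent with the typical value $\lambda_0 := (1+\alpha)^{1/(1+\alpha)}$ separating the two regimes). Substituting back and applying integration by parts on $\int_0^1 \log(1 \pm s u^\alpha)\,du$ reduces the optimal value to a closed form that, after using the stationarity condition to eliminate the remaining integral, matches $f_\alpha(\lambda) = (1+\alpha)\lambda\log\lambda - (1+\alpha)\lambda + \lambda^{-\alpha}$. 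The thresholds $\lambda_{-}(\alpha) < \lambda_{+}(\alpha)$ are then exactly the roots of $f_\alpha$, outside of which the variational value is strictly positive.

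The main obstacle will be this algebraic identification of the rate function as $f_\alpha$, since the stationarity condition is not explicitly solvable in $s$; a parametric approach that regards $\lambda$ and the optimal value as explicit functions of $s^*$ and then checks $\frac{d}{ds^*}[\text{value} - f_\alpha(\lambda(s^*))] \equiv 0$ appears cleanest. Secondary care is needed for: (a) the Riemann-sum error, which for $\alpha > 0$ is absorbed into the arbitrary $(1-\varepsilon)$ factor since the integrand is bounded and the mass near $u = 0$ is negligible; (b) the domain restriction $s < 1$ in the lower-tail case (which is automatic since we are in the regime $\lambda < \lambda_{-}(\alpha) < \lambda_0$); and (c) the uniformity of the first-order expansion of the geometric MGF, which requires handling the small-$m$ terms separately from the Riemann sum (they contribute $O(1)$ and are likewise absorbed).
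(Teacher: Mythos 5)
Your reduction to hitting times is sound and genuinely different from the paper's proof, which counts lattice paths directly, bounds each path's probability by $\frac{1}{(k!)^{\al}}(1-\frac{1}{(k+1)^\al})^{t-k}$, and applies Stirling's formula. The decomposition $\tau_n=\sum_{m=N_0}^{n-1}G_m$ into independent geometrics, the duality with $\{N_t<n\}$, the Chernoff bound, and the Riemann-sum asymptotics are all correct in outline. The gap is in your ``identification step'': the optimized Chernoff exponent does \emph{not} equal $f_\al(\lambda)$ for $\al>0$, so the algebraic verification you plan to carry out cannot succeed. Concretely, for $\al=1$ and $\lambda=1/2$, your lower-tail functional is $\Phi(s)=s\lambda^{-1}+\lambda\int_0^1\log(1-su)\,du$, and already $\Phi(0.99)\approx 1.50$, whereas $f_1(1/2)=\log(1/2)+1\approx 0.31$; the two rate functions agree only at $\al=0$. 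The reason is that the paper's $f_\al$ is not the true rate: it inherits the loss from bounding every flat-step probability $1-1/m^\al$ by the worst case $1-1/(k+1)^\al$ (the paper itself attributes the gap $\lambda_-(\al)<(1+\al)^{\frac{1}{1+\al}}<\lambda_+(\al)$ to this combinatorial estimate). Your Cram\'er computation produces a strictly larger exponent, which --- as your own observation that the stationarity condition degenerates exactly at $\lambda_0=(1+\al)^{\frac{1}{1+\al}}$ shows --- is positive for \emph{every} $\lambda$ on the relevant side of $\lambda_0$, and would therefore prove the stronger statement conjectured in the paper's appendix. But as written, the step ``the optimal value matches $f_\al(\lambda)$'' is false, and a proof that hinges on verifying it would stall.

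The fix is to replace the claimed identity by the one-sided inequality $\sup_s\Phi(s)\ge f_\al(\lambda)$, which needs no optimization. For the lower tail take $s=1-\lambda^{1+\al}\in(0,1)$ (valid since $f_\al(1)=-\al<0$ forces $\lambda_-(\al)<1$). Integration by parts gives $\int_0^1\log(1-su^\al)\,du=\log(1-s)+s\al\int_0^1\frac{u^\al}{1-su^\al}\,du\ge (1+\al)\log\lambda+\frac{s\al}{1+\al}$, whence $\Phi(s)\ge \lambda^{-\al}-\lambda+(1+\al)\lambda\log\lambda+\frac{s\al\lambda}{1+\al}=f_\al(\lambda)+\al\lambda+\frac{s\al\lambda}{1+\al}\ge f_\al(\lambda)$; the choice $s=\lambda^{1+\al}-1>0$ works symmetrically for the upper tail. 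With that substitution your argument goes through, and the technical items you flag (Riemann-sum error, uniformity of the MGF expansion, the small-$m$ terms, and the lower-order $n\theta=sn^{1-\al}$ contribution) are indeed routine and absorbed into the $(1-\varepsilon)$ factor.
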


\begin{proof}
Without loss of generality, assume that $N_0= 1$.
Note that $(N_t, \,  t \ge 0)$ has increments $\{0,1\}$,
so there are $\binom{t}{k}$ paths ending at $N_t = k+1$ 
(one has to choose $k$ upward steps ``$1$'' out of $t$ steps).
Moreover, the probability of each path ending at $N_t = k+1$ is upper bounded by
\begin{equation*}
\frac{1}{(k !)^{\al}}\left(1 - \frac{1}{(k+1)^\al} \right)^{t - k},
\end{equation*}
since the $k$ upward ``$1$'' steps contribute $1/k!$, 
and the remaining $t -k$ flat ``$0$'' steps have at most probability $\left(1 - \frac{1}{(k+1)^\al} \right)^{t - k}$.
Thus,
\begin{equation*}
\pr(N_t \le m+1) \le \sum_{k \le m} a_{k} \quad \mbox{and} \quad \pr(N_t > m) \le  \sum_{k \ge m} a_k
\end{equation*}
where
\begin{equation}
\label{eq:akk}
a_k:= \binom{t}{k} \frac{1}{(k !)^{\al}}\left(1 - \frac{1}{(k+1)^\al} \right)^{t - k}.
\end{equation}
Standard analysis shows that there are $0 < k_1 < k_2$ such that $a_k$ is nondecreasing on $[1, k_1)$ and $[k_2, t)$.
As we will see, $k_1 \sim \lambda_{-}(\al)\, t^{\frac{1}{1 + \al}}$ and $k_2 \sim \lambda_{+}(\al) \, t^{\frac{1}{1+\al}}$ as $t \to \infty$.
The idea is to study the term $a_k$ with $k={\lambda t^{\frac{1}{1+\al}}}$ for $\lambda > 0$ as $t \to \infty$.
By Stirling's formula, 
\begin{equation*}
\binom{t}{\lambda t^{\frac{1}{1+\al}}} \sim \frac{1}{\sqrt{2 \pi \lambda}} t^{-\frac{1}{2(1+\al)}} \exp \left( \frac{\lambda \al}{1+\al} t^{\frac{1}{1 + \al}} \log t + (\lambda - \lambda \log \lambda) t^{\frac{1}{1+\al}} + o\left( t^{\frac{1}{1+\al}}\right) \right),
\end{equation*}
\begin{equation*}
\left(\lambda t^{\frac{1}{1+\al}}\right)! \sim
\sqrt{2 \pi \lambda} \, t^{\frac{1}{2(1+\al)}} \exp \left( \frac{\lambda}{1+\al} t^{\frac{1}{1+\al}} \log t + (\lambda \log \lambda - \lambda)t^{\frac{1}{1+\al}}\right),
\end{equation*}
and
\begin{equation*}
\left(1 - \frac{1}{\lambda^\al t^{\frac{\al}{1+\al}}} \right)^{t-\lambda t^{\frac{1}{1+\al}}} = \exp \left( - \frac{t^{\frac{1}{1+\al}}}{\lambda^\al}
+ o\left(t^{\frac{1}{1+\al}} \right)\right).
\end{equation*}
Combining the above estimates yields
\begin{equation}
a_{\lambda \sqrt{t}} \sim (2 \pi \lambda)^{-\frac{1+\al}{2}} t^{-\frac{1}{2}} 
\exp \left( - f_\al(\lambda) \, t^{\frac{1}{1+\al}}\right).
\end{equation}
Note that 
$f'_\alpha(\lambda) = (1+\alpha)\log \lambda - \alpha \lambda^{-1-\alpha}$, which is increasing from $-\infty$ to $\infty$ on $[0,\infty)$.
The unique stationary point of $f_\alpha$ on $[0,\infty)$ is achieved at $\lambda_*$ such that $\lambda_*^{\alpha + 1} \log \lambda_* = \frac{\alpha}{1+\alpha}$, so it is clear that $\lambda_* > 1$. We have 
\begin{equation*}
f_\alpha(\lambda_*) = (\alpha - 1) \lambda_*^{-\alpha} - (1+\alpha) \lambda_* < 0.
\end{equation*}
Thus, the function $\lambda \to f_\al(\lambda)$ has two roots $\lambda_{-}(\al) < \lambda_{+}(\al)$ on $[0, \infty)$,
and $f_\al > 0$ on $(0, \lambda_{-}(\al)) \cup (\lambda_{+}(\al), \infty)$.
As a result, 
for each $\lambda < \lambda_{-}(\al)$ we have
\begin{equation*}
\pr(N_t < \lambda t^{\frac{1}{1+\al}}) \le C_\lambda t^{-\frac{1}{2} + \frac{1}{1+\al}} \exp \left( - f_\al(\lambda) \, t^{\frac{1}{1+\al}}\right) \le \exp \left( - (1 - \varepsilon) f_\al(\lambda) \, t^{\frac{1}{1+\al}}\right) \, \mbox{as } t \to \infty,
\end{equation*}
and each $\lambda > \lambda_{+}(\al)$ we have
\begin{equation*}
\pr(N_t > \lambda t^{\frac{1}{1+\al}}) \le C'_\lambda t^{\frac{1}{2}} \exp \left( - f_\al(\lambda) \, t^{\frac{1}{1+\al}}\right) \le \exp \left( - (1 - \varepsilon) f_\al(\lambda) \, t^{\frac{1}{1+\al}}\right) \, \mbox{as } t \to \infty.
\end{equation*}
\end{proof}

\quad The theorem above gives exponential deviation bounds for $N_t$ 
when it is either sufficiently small (below $\lambda_{-}(\al) t^{\frac{1}{1+\al}}$),
or sufficiently large (above $\lambda_{+}(\al) t^{\frac{1}{1+\al}}$).
Note that there is a gap between the two bounding curves,
%$\lambda_{-}(\al) t^{\frac{1}{1+\al}}$ and $\lambda_{+}(\al) t^{\frac{1}{1+\al}}$,
%but this is enough for our purpose.
%which relies on a combinatorial argument.
since for each $\al > 0$, 
$\lambda_{-}(\al) < (1+\al)^{\frac{1}{1+\al}} < \lambda_{+}(\al)$.
(For instance, for $\al = 1$,  
$\lambda_{-}(1) \approx 0.56 < \sqrt{2} < 2.51 \approx \lambda_{+}(1)$.)
%Theorem \ref{prop:2} gives exponential deviation bounds for $N_t$ 
%when it is either sufficiently small (below $\lambda_{-}(\al) t^{\frac{1}{1+\al}}$),
%or sufficiently large (above $\lambda_{-}(\al) t^{\frac{1}{1+\al}}$).
The gap is due to the combinatorial estimates in our proof, 
which may very well be improved.
Refer to the Appendix \ref{sc41} for a numerical procedure %to improve $\lambda_{\pm}(\al)$ 
that shrinks the gap.

\quad As a corollary, the volume of stakes $N_t$ concentrates around $((1+\al)t)^{\frac{1}{1+\al}}$ for large $t$.

\begin{corollary}
\label{coro:1}
Under the $\texttt{Poly}(\al)$ voting rule, we have, for each $\delta > 0$,
\begin{equation}
\label{eq:concentno}
\pr\left(|N_t - ((1+\al)t)^{\frac{1}{1+\al}} |> \delta t^{\frac{1}{1+\al}}\right) = \mathcal{O}( t^{-\frac{1}{1+\alpha}}) \quad \mbox{as } t \to \infty.
\end{equation}
\end{corollary}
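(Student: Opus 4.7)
The strategy is to recognize that the corollary is a direct reformulation of Theorem \ref{thm:1}(i). Set $c := (1+\al)^{1/(1+\al)}$, so that $((1+\al)t)^{1/(1+\al)} = c\, t^{1/(1+\al)}$; the event to be controlled is then $\{|N_t/t^{1/(1+\al)} - c| > \delta\}$, and the statement is equivalent to $N_t/t^{1/(1+\al)} \to c$ in probability as $t\to\infty$.

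I would obtain this by applying the continuous mapping theorem to Theorem \ref{thm:1}(i), which asserts $N_t^{1+\al}/t \to 1+\al$ in probability. Since the map $x \mapsto x^{1/(1+\al)}$ is continuous on $(0,\infty)$ and the limit $1+\al$ lies in this domain, one gets $N_t/t^{1/(1+\al)} = (N_t^{1+\al}/t)^{1/(1+\al)} \to c$ in probability, which is exactly the required statement.

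There is essentially no obstacle; the corollary is a one-line consequence of the in-probability convergence already established. One might attempt to read the result off directly from the large-deviation bounds of Theorem \ref{prop:2}, but those bounds are informative only for $\lambda$ outside the interval $[\lambda_{-}(\al),\lambda_{+}(\al)]$, which strictly contains $c$ whenever $\al>0$ (for instance, a direct calculation gives $f_1(\sqrt{2})<0$), so they alone cannot force $N_t$ to lie within $\delta t^{1/(1+\al)}$ of $c\, t^{1/(1+\al)}$ for arbitrarily small $\delta$. The route via continuous mapping from Theorem \ref{thm:1}(i) is therefore the clean one, and is what I would write up.
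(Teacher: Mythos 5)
Your proof is correct, but it takes a genuinely different and much shorter route than the paper. You observe that the corollary is precisely the statement $N_t/t^{\frac{1}{1+\al}} \to (1+\al)^{\frac{1}{1+\al}}$ in probability, and you obtain it by applying the continuous mapping theorem (with the continuous map $x \mapsto x^{\frac{1}{1+\al}}$) to the convergence $N_t^{1+\al}/t \to 1+\al$ of Theorem \ref{thm:1}(i); since convergence in distribution to a constant is convergence in probability, this is a complete one-line argument, and there is no circularity because Theorem \ref{thm:1}(i) is proved by the method of moments without reference to the corollary. The paper instead runs a second-moment argument: it uses Theorem \ref{thm:1}(i) together with the large-deviation lower bound of Theorem \ref{prop:2} to establish uniform integrability of $((1+\al)t)^{\frac{\al}{1+\al}}N_t^{-\al}$, deduces the asymptotics $\ex(N_t^{-\al}) \sim ((1+\al)t)^{-\frac{\al}{1+\al}}$ and $\ex(N_t^{1-\al}) \sim ((1+\al)t)^{\frac{1-\al}{1+\al}}$, feeds these into exact recursions for $\ex(N_t)$ and $\ex(N_t^2)$ to get $\var(N_t) = o\bigl(t^{\frac{2}{1+\al}}\bigr)$, and concludes by Chebyshev's inequality. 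What the paper's longer route buys is quantitative information---the mean asymptotics and the variance bound for $N_t$, and the truncation-plus-uniform-integrability technique that reappears in Lemma \ref{lem:momentest}---whereas your route establishes the stated qualitative convergence with minimal machinery. Your side remark that the large-deviation bounds alone cannot give the corollary, because $(1+\al)^{\frac{1}{1+\al}}$ lies strictly inside the gap $(\lambda_-(\al),\lambda_+(\al))$ where $f_\al<0$, is also accurate.
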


\begin{proof}
Note that $N_0 \le N_t \le t + N_0$, and 
by Theorem \ref{prop:2}, we get $\lambda_1 t^{\frac{1}{1+\alpha}} \le N_t \le \lambda_2 t^{\frac{1}{1+\alpha}}$
with probability $1 - \exp(-C \lambda t^{\frac{1}{1+\alpha}})$ for some $\lambda_1, \lambda_2, C > 0$.
Thus,
\begin{equation*}
\mathbb{E}N_t^{-1} = \mathcal{O}(t^{-\frac{1}{1+\alpha}}) \quad \mbox{and} \quad
\mathbb{E}N_t^\alpha = \mathcal{O}(t^{\frac{\alpha}{1+\alpha}}).
\end{equation*}
According to the proof of Proposition \ref{prop:Nt} (ii), we have
\begin{equation*}
\mathbb{E}(N_{t+1}^{1+\alpha} - N_t^{1 + \alpha}) = 1+ \alpha + \mathcal{O}(\mathbb{E}N_t^{-1})
\quad \mbox{and} \quad
\mathbb{E}(N_{t+1}^{2(1+\alpha)} - N_t^{2(1 + \alpha)}) = 2(1+\alpha) \mathbb{E}N_t^{1+\alpha} + \mathcal{O}(\mathbb{E}N_t^{\alpha}).
\end{equation*}
Therefore,
$\mathbb{E}N_t^{1+\alpha} = (1+\alpha)t + \mathcal{O}(t^{\frac{\alpha}{1+\alpha}})$ and 
$\mathbb{E}(N_t^{2(1+\alpha)}) = (1+\alpha)^2 t^2 + \mathcal{O}(t^{\frac{1+2 \alpha}{1+\alpha}})$,
which implies $\var(N_t^{1+\alpha}) = \mathcal{O}(t^{\frac{1+2 \alpha}{1+\alpha}})$.
Hence,
\begin{equation*}
\pr\left(|N_t^{1+\alpha} - (1+\alpha) t| > \delta t \right) = \mathcal{O}(t^{-\frac{1}{1+\alpha}}) \quad \mbox{for } t \to \infty.
\end{equation*}
Taking $\lambda < \lambda_-(\alpha)$, we have
\begin{align*}
& \quad \, \pr\left(|N_t - ((1+\al)t)^{\frac{1}{1+\al}} |> \delta t^{\frac{1}{1+\al}}\right) \\
& \le \pr\left(N_t < \lambda t^{\frac{1}{1+\alpha}} \right) 
+  \pr\left(|N_t - ((1+\al)t)^{\frac{1}{1+\al}} |> \delta t^{\frac{1}{1+\al}}, \, N_t \ge \lambda t^{\frac{1}{1+\alpha}} \right) \\
& \le \exp(- C' t^{\frac{1}{1+\alpha}}) + \pr(|N_t^{1+\alpha} - (1+\alpha) t| > C''t),
\end{align*}
for some $C', C'' > 0$ (depending on $\alpha, \delta, \lambda$).
Combining the above estimates yield \eqref{eq:concentno}.
\end{proof}

\quad Recall that the process $(N_t, \, t \ge 0)$ is a time-homogenous Markov chain.
%or a random walk with state-dependent transition probabilities. 
The path properties of a general time-homogenous Markov chain $(Z_t, \, t \ge 0)$ has long been studied 
since the work of \cite{Lam60, Lam62, Lam63}.
The basic idea is to study the recurrence, or transience of $(Z_t, \, t \ge 0)$ based on
\begin{equation*}
m_1(x) = \ex(Z_{t+1} - Z_t \,|\, Z_t = x) \quad \mbox{and} \quad m_2(x) = \ex((Z_{t+1} - Z_t)^2 \,|\, Z_t = x).
\end{equation*}
For instance, if $\limsup_{x \to \infty} 2x m_1(x) + m_2(x) \le 0$ then $(Z_t, \, t \ge 0)$ is recurrent;
and if $\liminf_{x \to \infty} 2x m_1(x) + m_2(x) > 0$ then $(Z_t, \, t \ge 0)$ is transient.
The regime corresponding to $m_1(x) = o(1)$ is called the {\em Markov chain with asymptotic zero drift},
and features active research (see e.g. \cite{DKW16, MPW17}).
Specializing to the process $(N_t, \, t \ge 0)$, we have
\begin{equation*}
m_1(x) = m_2(x) = \frac{1}{x^\al}.
\end{equation*}
Interestingly, there seem to be few results on the Lamperti's problem where both $m_1(x)$ and $m_2(x)$ decreases to zero as $x \to \infty$, except that $(N_t, \, t \ge 0)$ is transient.
Observe that $(N_t, \, t \ge 0)$ is nondecreasing, and 
\begin{equation*}
\var(N_{t+1} \,|\, N_t =x) = \left(1 - \frac{1}{x^\alpha} \right)^2 \frac{1}{x^\alpha} + \left(-\frac{1}{x^\alpha}\right)^2 \left(1 - \frac{1}{x^\alpha} \right),
\end{equation*}
where the ``upward" contribution $\left(1 - \frac{1}{x^\alpha} \right)^2 \frac{1}{x^\alpha}$
is larger than the ``downward'' counterpart $\frac{1}{x^{2 \alpha}} \left(1 - \frac{1}{x^\alpha} \right)$
as $x \to \infty$.
In the similar spirit to \cite{Lam62}, the asymptotic growth \eqref{eq:cvpN} hinges on a degenerate fluid approximation of the process $(N_t, \, t \ge 0)$, as stated in the following proposition.

\begin{proposition}[Fluid limit of $N_t$]
\label{prop:1}
Under the $\texttt{Poly}(\alpha)$ voting rule, we have
\begin{equation}
\label{eq:fluid}
\left( \frac{N_{nu}}{n^{\frac{1}{1+\al}}}, \, u \ge 0 \right) \stackrel{d}{\longrightarrow} (X_u, \, u \ge 0) \quad \mbox{as } n \to \infty \mbox{ in } \mathcal{C}[0,\infty),
\end{equation}
where $N_s$ for non-integer $s$ is defined by the linear interpolation of the chain $(N_t, \, t \ge 0)$,
and $X_u = \left( (1 + \al) u\right)^{\frac{1}{1+\al}}$, $u \ge 0$ is the solution to the 
ordinary differential equation $dX_u = X_u^{-\al} dt$ with $X_0 = 0$.
\end{proposition}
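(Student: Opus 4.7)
The plan is to bootstrap the marginal concentration of Corollary \ref{coro:1} into a functional fluid limit by exploiting the monotonicity of the path $t \mapsto N_t$; this sidesteps any explicit tightness or Aldous-type criterion that a generic fluid-limit theorem would otherwise demand.

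First, for each fixed $u > 0$, applying Corollary \ref{coro:1} with $t = \lfloor nu \rfloor$ gives
\begin{equation*}
\frac{N_{\lfloor nu \rfloor}}{n^{1/(1+\al)}} \longrightarrow \left((1+\al)u\right)^{1/(1+\al)} = X_u \qquad \text{in probability,}
\end{equation*}
and the error from passing to the linear interpolation is at most $1/n^{1/(1+\al)}$, hence negligible. Because the limit $X_u$ is deterministic, these marginal statements automatically upgrade to joint convergence in probability at any finite collection of times $0 \le u_1 < \cdots < u_m$, i.e.\ convergence of all finite-dimensional distributions.

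Next I would promote finite-dimensional convergence to convergence in $\mathcal{C}[0,\infty)$ via monotonicity. For every $n$ the interpolated path $u \mapsto N_{nu}/n^{1/(1+\al)}$ is nondecreasing, while the candidate limit $u \mapsto X_u$ is continuous and strictly increasing on $[0,\infty)$. The classical fact that pointwise convergence of monotone functions to a continuous limit upgrades to uniform convergence on compacts (a Dini-type extension) can be applied pathwise: fix $T>0$ and $\varepsilon>0$, pick a grid $0 = u_0 < u_1 < \cdots < u_m = T$ with mesh finer than the modulus of continuity of $X$ at scale $\varepsilon$; on the event $\{\max_i |N_{n u_i}/n^{1/(1+\al)} - X_{u_i}| \le \varepsilon\}$, monotonicity squeezes $N_{nu}/n^{1/(1+\al)}$ between $X_{u_i} - \varepsilon$ and $X_{u_{i+1}} + \varepsilon$ for $u \in [u_i, u_{i+1}]$, forcing the sup-norm deviation on $[0,T]$ to be $\mathcal{O}(\varepsilon)$. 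Together with the finite-dimensional statement, this yields convergence in probability in $\mathcal{C}([0,T])$ for each $T>0$, which is \eqref{eq:fluid}.

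The main obstacle is thus the bookkeeping to turn the deterministic Dini-type squeeze into a probabilistic statement and to identify the ODE solution $X_u$. The ODE characterization itself is immediate by separation of variables: integrating $dX_u = X_u^{-\al}\,du$ with $X_0 = 0$ yields $X_u^{1+\al}/(1+\al) = u$, matching exactly the normalizing factor $1+\al$ that already emerged from $\ex(N_{t+1}^{1+\al} - N_t^{1+\al} \mid N_t = x) = 1 + \al + \mathcal{O}(x^{-1})$ in the proof of Theorem \ref{thm:1}(i). A more standard alternative would be the martingale decomposition $N_t = N_0 + \sum_{s<t} N_s^{-\al} + M_t$ with predictable quadratic variation $\langle M \rangle_t \le \sum_{s<t} N_s^{-\al}$, bounding the martingale on the scale $n^{1/(1+\al)}$ via Doob's inequality and handling the compensator with Corollary \ref{coro:1}; the monotonicity route seems both shorter and more transparent.
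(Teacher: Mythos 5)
Your argument is correct, and it reaches the same destination as the paper by a genuinely different final step. The paper follows the standard Billingsley route: it uses Theorem \ref{thm:1}(i) to control the endpoint $N_{[nT]}/n^{1/(1+\al)}$, invokes monotonicity to confine the whole rescaled path to a box $[0,T]\times[0,K(\varepsilon)]$ with high probability, asserts tightness from this, and then combines tightness with convergence of finite-dimensional distributions. You instead skip tightness altogether: after the same finite-dimensional step (marginal concentration to a deterministic limit, hence joint convergence in probability on any finite grid), you use the P\'olya/Dini-type squeeze --- nondecreasing paths pinned to within $\varepsilon$ of a uniformly continuous increasing limit at grid points are within $\mathcal{O}(\varepsilon)$ of it uniformly on $[0,T]$ --- to obtain convergence in probability in the sup norm directly, which is stronger than the stated weak convergence. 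This is arguably the more airtight route: confinement to a box does not by itself yield tightness in $\mathcal{C}[0,T]$ (one still needs a modulus-of-continuity estimate, which the paper leaves implicit), whereas your squeeze supplies exactly that uniform control as a byproduct. Two minor points to keep: state explicitly that the grid mesh is chosen so that $X_{u_{i+1}}-X_{u_i}\le\varepsilon$ (uniform continuity of $X$ on $[0,T]$), and note that the case $u=0$ is trivial since $N_0/n^{1/(1+\al)}=N/n^{1/(1+\al)}\to 0=X_0$; the interpolation error bound of $n^{-1/(1+\al)}$ and the identification of the ODE solution are both fine as written.
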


\begin{proof}
Fix $T > 0$. 
It suffices to prove the weak convergence $\eqref{eq:fluid}$ on $[0,T]$.
%(Refer to \cite[Chapter 2]{Bill99} for background on the weak convergence in $\mathcal{C}[0,\infty)$.)
By Proposition \ref{prop:Nt} (ii), there is the convergence in probability
$N_{[nT]}/n^{\frac{1}{1+\al}} \to X_T$ as $n \to \infty$.
Given $\varepsilon > 0$, there is $n(\varepsilon) > 0$ such that for any $n > n(\varepsilon)$,
\begin{equation*}
\pr \left(N_{[nT]}/n^{\frac{1}{1+\al}}  < 2X_T \right) > 1 - \varepsilon.
\end{equation*}
Let $K(\varepsilon): = \max(2X_T, \max_{n \le n(\varepsilon)} (N_0 + [nT])/n^{\frac{1}{1+\al}})$.
We have 
$\pr(N_{[nT]}/n^{\frac{1}{1+\al}} < K(\varepsilon)) > 1 - \varepsilon$ for each $n \in \mathbb{N}_{+}$.
Note that for each $n \in \mathbb{N}_+$, the process $N^{n, T}:= \left(N_{[nT]}/n^{\frac{1}{1+\al}}, \, 0 \le t \le T \right)$ is nondecreasing.
Thus, 
\begin{equation*}
\pr \left(N^{n,T} \in [0,T] \times [0, K(\varepsilon) ] \right) > 1 - \varepsilon.
\end{equation*}
So the sequence of processes $(N^{n,T}, \, n \in \mathbb{N}_{+})$ is tight.
Moreover, for each $t \in [0,T]$, $N_{[nt]}/n^{\frac{1}{1+\al}}$ converges in probability to $X_t$ as $n \to \infty$.
Then for $0 \le t_1 < \cdots < t_k$, 
the vector $(N_{[nt_1]}/n^{\frac{1}{1+\al}}, \cdots, N_{[nt_k]}/n^{\frac{1}{1+\al}})$ converges in probability to $(X_{t_1}, \cdots, X_{t_k})$, i.e. the convergence in finite-dimensional distributions.
The weak convergence follows readily from the tightness and the convergence in finite-dimensional distributions 
(see e.g. \cite[Chapter 2]{Bill99}).
\end{proof}

\quad Note that the fluid limit in the proposition is different from the fluid limit in the literature of stochastic networks, where it usually takes the form of a FSLLN (functional strong law of large numbers) concerning a renewal process and the associated counting process.
 In that setting, the convergence (to a deterministic function of time) is stronger -- the almost sure convergence, and uniformly on $[0,T]$. 
 Refer to \cite[\S6.1]{CY01}.
Here, the process $(N_t, \, t \ge 0)$ is non-renewal, thus the FSLLN limit does not apply;
yet there is still the {\it weak} convergence, and the limit is still a deterministic function of time $(X_u, \, u \ge 0)$, explicitly characterized above. 
Another notable point is, in the FSLLN setting, both time and space are scaled by the same scaling factor $n$
whereas in \eqref{eq:fluid} the time scaling remains the same, and the space scaling is by $n^{\frac{1}{1+\al}}$.
But this is only because $(N_t, \, t \ge 0)$ grows in the order of $t^{\frac{1}{1+\al}}$ (Proposition \ref{prop:Nt} (ii)), 
whereas a renewal (counting) process grows linearly in $t$. 

\subsection{Bidder shares and voting powers}
\label{sc22}
Here we study the evolution and the long-time behavior of $(\pi_{k,t}, \, k \in [K])$ and $(\theta_{k, t},\, k \in [K])$.
Recall that the Dirichlet distribution with parameters $(a_1, . . . , a_K)$, which we denote by $\Dir(a_1, . . . , a_K)$, 
has support on the standard simplex $\{(x_1, \ldots, x_K) \in \mathbb{R}_+^K: \sum_{k = 1}^K x_k = 1\}$ and has density 
\begin{equation*}
f(x_1, \ldots x_K) = \frac{\Gamma(\sum_{k = 1}^K a_k)}{\prod_{k = 1}^K \Gamma(a_k)} \prod_{k = 1}^K x_k^{a_k - 1},
\end{equation*}
where $\Gamma(z) = \int_0^\infty x^{z-1} e^{-x} dx$ is the Gamma function.
For $K = 2$, the Dirichlet distribution reduces to the beta distribution, denoted as $\Beta(a_1, a_2)$.
It is easily seen that if $(x_1, \ldots, x_K) \stackrel{d}{=} \Dir(a_1, \ldots, a_K)$ 
then for each $k \in [K]$,  $x_k \stackrel{d}{=} \Beta(a_k, \sum_{j \ne k} a_j)$.

\begin{theorem}[Long-time behavior]
\label{thm:1}
Under the $\texttt{Poly}(\alpha)$ voting rule, we have the following limiting distributions.
%\begin{enumerate}[itemsep = 3 pt]
\begin{itemize}
\item[(i)]
Bidder shares: the process $(\pi_{k,t}, \, t \ge 0)$ is an $\mathcal{F}_t$-martingale, and with probability one,
\begin{equation}
\label{eq:mglecv}
(\pi_{1,t}, \ldots, \pi_{K,t}) \longrightarrow (\pi_{1,\infty}, \ldots, \pi_{K,\infty}) \quad \mbox{as } t \rightarrow \infty,
\end{equation}
where $(\pi_{1,\infty}, \ldots, \pi_{K,\infty}) \stackrel{d}{=} \Dir(n_{1,0}, \ldots, n_{K,0})$. 
Moreover, for each $k \in [K]$,
\begin{equation}
\label{eq:Warate}
d_W\left(\pi_{k,t},  \, \Beta\left(n_{k,0}, \, N - n_{k,0}\right)\right) = \mathcal{O}(t^{-\frac{1}{1+\alpha}}) \quad \mbox{as } t \to \infty.
\end{equation}
\item[(ii)]
Voting powers: the process $(\theta_{k,t}, \, t \ge 0)$ is an $\mathcal{F}_t$-super-martingale, and
for $\alpha > 0$, with probability one,
$\theta_{k,t} \rightarrow 0$ as $t \to \infty$ for each $k \in [K]$.
Moreover, for each $k \in [K]$,
\begin{equation}
\label{eq:thetaCLT}
(1 +\alpha)^{\frac{\alpha}{1+\alpha}} t^{\frac{\alpha}{1+\alpha}} \theta_{k,t} \stackrel{d}{\longrightarrow} \Beta\left(n_{k,0}, \,N - n_{k,0} \right) \quad \mbox{as } t \to \infty.
\end{equation}
%\begin{equation}
%\label{eq:supercv}
%\th_{k,t} \rightarrow 0 \quad \mbox{as } t \rightarrow \infty, \, k \in [K].
%\end{equation}
\end{itemize}
%\end{enumerate}
\end{theorem}

\begin{proof}
(i) By \eqref{eq:sharet} and \eqref{eq:TP2}, it is easily seen that for each $k \in [K]$ and $t \ge 0$,
\begin{equation}
\label{pi}
\ex (\pi_{k,t+1} \,|\, \mathcal{F}_t) 
= \frac{n_{k,t}}{N_t} \left(1- \frac{1}{N_t^\al}\right)
+ \frac{n_{k,t}}{N_t+1} \, \frac{N_t- n_{k,t}}{ N_t^{1+\al}} 
+ \frac{n_{k,t}+1}{N_t+1} \,  \frac{n_{k,t}}{ N_t^{1+\al}}.
\end{equation}
Recognizing the first term on the right side of \eqref{pi}, $\frac{n_{k,t}}{N_t}=\pi_{k,t}$, while all other terms sum up to zero, 
we conclude that $(\pi_{k,t}, \, t \ge 0)$ is an $\mathcal{F}_t$-martingale. 
The convergence in \eqref{eq:mglecv} follows from the martingale convergence theorem (see e.g. \cite[Section 4.2]{Durrett}).
By Proposition \ref{prop:embed}, $(\pmb{n}_t, \, t \ge 0)$ is a time-changed P\'olya urn.
So the limiting shares $(\pi_{1,\infty}, \ldots, \pi_{K,\infty})$ have the same distribution as that of the P\'olya urn, which is $\Dir(n_{1,0}, \ldots, n_{K,0})$.

Let $(\pmb{n}^\dagger_t, \, t \ge 0)$ be the P\'olya urn with $n^\dagger_{k, 0} = n_{k,0}$, 
and $(\pi^\dagger_{k,t}, \ldots, \pi^\dagger_{K,t})$ be the corresponding shares. 
Set $Z \stackrel{d}{=} \Beta\left(n_{k,0}, N - n_{k,0} \right)$.
By \cite{GR13}, we have for each $k \in [K]$,
\begin{equation}
\label{eq:Steinrate}
d_W \left( \pi^\dagger_{k,t}, \, Z \right) = \mathcal{O} (t^{-1}) \quad \mbox{as } t \to \infty.
\end{equation}
Taking $\lambda < \lambda_{-}(\alpha)$, we get
\begin{align*}
d_W(\pi_{k,t}, \, Z) & \le \pr(N_t < \lambda t^{\frac{1}{1+\alpha}}) + d_W\left(\pi_{k,t} 1_{N_t \ge \lambda t^{\frac{1}{1+\alpha}}}, \, Z \right) \\
& \le C \pr(N_t < \lambda t^{\frac{1}{1+\alpha}}) + \sum_{s \ge \lambda t^{\frac{1}{1+\alpha}}} d_W((\pi_{k,t} | N_t = s), Z) \pr(N_t = s) \\
& = C \pr(N_t < \lambda t^{\frac{1}{1+\alpha}}) + \sum_{s \ge \lambda t^{\frac{1}{1+\alpha}}} d_W(\pi^\dagger_{k,s-N}, Z) \pr(N_t = s) \\
& \le C \exp(-C't^{\frac{1}{1+\alpha}}) + C''t^{-\frac{1}{1+\alpha}},
\end{align*}
where the last inequality follows from Theorem \ref{prop:2} and \eqref{eq:Steinrate}.
This yields the bound \eqref{eq:Warate}.

(ii) Applying the same derivation as in (\ref{pi}) but to $\theta_{k,t}$ instead, we have
\begin{equation*}
%\label{th0}
\ex (\theta_{k,t+1} \,|\, \mathcal{F}_t) 
= \frac{\pi_{k,t}}{N_t^{\al}} \left(1- \frac{1}{N_t^\al}\right)
+ \frac{N_t\pi_{k,t}}{(N_t+1)^{1+\al}}\cdot \frac{1- \pi_{k,t}}{ N_t^{\al}} 
+ \frac{N_t\pi_{k,t}+1}{(N_t+1)^{1+\al}} \cdot \frac{\pi_{k,t}}{ N_t^{\al}}.
\end{equation*}
The last two terms add up to $\frac{\pi_{k,t}}{N_t^\al (N_t+1)^\al} =\frac{\theta_{k,t}}{(N_t+1)^\al}$. 
Thus, we have
\begin{equation*}
%\label{th}
\ex (\theta_{k,t+1} \,|\, \mathcal{F}_t) 
=\theta_{k,t} \left( 1-\frac{1}{N_t^\al}+ \frac{1}{(N_t+1)^\al}\right) \le \theta_{k,t},
\end{equation*}
i.e.  $(\theta_{k,t}, \, t \ge 0)$ is an $\mathcal{F}_t$-super-martingale for each $k$. 
Recall that $N_t^ \alpha \theta_{k,t} = \pi_{k,t}$ 
so $\theta_{k,t} \le N_t^{-\alpha}$ which converges to $0$ with probability one.
By (i), $N_t^\alpha \theta_{k,t}$ converges almost surely, and hence in distribution to $Z$. 
By Proposition \ref{prop:Nt}, $N_t/((1 +\alpha) t)^{\frac{1}{1+\alpha}}$ converges in probability to $1$.
We then apply Slutsky's theorem to get the convergence in \eqref{eq:thetaCLT}.
\end{proof}

\quad Several remarks are in order. 
% \S\ref{scpfthm1} below. %given at the end of this section. Part (i) of the 
Part (i) of Theorem \ref{thm:1} shows that the bidder shares form a martingale, and converges to a Dirichlet distribution (independent of $\alpha$).
This should be expected from the fact that the underlying bidder stakes $(\pmb{n}_t, \, t \ge 0)$ is a time-changed P\'olya urn; 
refer to Proposition \ref{prop:embed}. 
What's more revealing is the Wasserstein bound in \eqref{eq:Warate} between a bidder's share and its limit.
In fact, a matching lower bound can also be established (which we leave to the interested reader). 
Thus, the convergence rate of the bidder shares is exactly of order $t^{-\frac{1}{1+\alpha}}$.
(Also refer to Proposition \ref{thm:2} below 
for further discussions on the stability of the bidder shares when the initial stakes $N:=N_0$ is large.)

\quad Part (ii) of the theorem implies that each bidder's voting power decays to zero at rate {$t^{-\frac{\alpha}{1+\alpha}}$}. % in the long run. 
Or, equivalently, the reward rate is slowed down: it takes a time of order $\Theta(t^{\frac{\alpha}{1+\alpha}})$ 
for any bidder to be rewarded a new (unit of) stake.  
This enhances {security}, so that no bidder can manipulate or control the bidding/voting process;
{while the level of decentralization remains unchanged.}
This also means the principle of security in (\ref{security}) becomes ``easier'' to hold at large time $t$,
%\begin{equation*}
%\frac{\mbox{fraction of honest bidders}}{\mbox{fraction of dishonest bidders}} > \frac{\mbox{security factor}}{1 - v_t},
%\end{equation*}
since (due to the network delay) $v \propto N_t^{-\alpha} \downarrow 0$ as $t \to \infty$.
%\footnote{Not sure whether my (slight) rewrite is accurate. Also I've changed $\nu$ to $v$ as 
%$\nu$ is used prominently in \S3.2.  - david}
On the other hand, if the reward is associated with transaction validation (which does not need to be), 
then the time required to validate a new block becomes uncontrolled in the long run.
A possible remedy is to dynamically
% impose an upper threshold on the number of rounds to validate a new block,
tune the parameter $\alpha$ over time, as detailed in the Appendix \ref{sc42}.

%---------------------------------------------------------------------------------------------------------------------------------------------------------------------------------------------------------------
\section{Other Results, with PoS-Crypto Applications}
\label{sc3}

\quad In this section, we present more results associated with the $\texttt{Poly}(\al)$ model 
that are largely motivated by the application of PoS in cryptocurrency. 
There are two subsections: 
In the first one, \S\ref{sc31}, we study the the evolution of bidder shares when $N:=N_0$, the volume of initial stakes, is large. 
In \S\ref{sc32}, 
we study the additional feature of allowing the bidders to trade stakes
among themselves, focusing on the issue of trading incentive (or the lack thereof). 
We remark that the results in both subsections exhibit some type of phase transitions,
and are independent of the parametric value of $\al$, and in this sense, {\em universal}.

%------------------------------------------
\subsection{Evolution of bidder shares and phase transitions}
\label{sc31}

As explained in the introduction, one key feature of the $\texttt{Poly}(\al)$ model is that
the reward rate, or the voting power (if the reward goes with validation work) $\theta_{k,t}$ of any bidder $k$ 
is different from $k$'s share $\pi_{k,t}$ of the total volume of stakes at $t$. 
%specifically, $\th_{k,t} = \pi_{k,t}/N_t^\al$.
We have seen from Theorem \ref{thm:1} (iii) that the reward rate or voting power is decreasing over time, 
which facilitates security. 
% in a $\texttt{Poly}(\al)$ voting-based PoS blockchain.
On the other hand, the evolution of the share $\pi_{k,t}$ over time, from its initial value $\pi_{k,0}$,
in both absolute and relative terms,
is an important issue for any individual bidder $k$.

\quad In the classical P\'olya urn setting, it is shown in \cite{RS21} that for a large bidder
with initial stake $n_{k,0} = \Theta(N)$, 
there is the stability in bidder share, in the sense that
\begin{equation*}
\pr(|\pi_{k,\infty} - \pi_{k,0}| > \varepsilon) \to 0 \quad \mbox{as } N \to \infty.
\end{equation*}
%On the other hand, online platforms such as Robinhood Crypto allow for fractional trading,
%and empirical studies from (\cite{FKP19, TGT20}) showed that 
%there are increasing activities from smaller users in blockchain networks.
%In response to these observations,
Furthermore, similar, albeit qualitatively different, results are revealed in 
 \cite{Tang22}, for small bidders (following the definition in part (ii) 
 of the corollary below).
 Here we focus on the ratio $\pi_{k,t}/\pi_{k,0}$. 
Since $\pi_{k,\infty} \stackrel{d}{=} \Beta(n_{k,0}, N - n_{k,0})$, 
 the results in \cite{Tang22} hold. 
 The following proposition
 %\footnote{Upgraded the corollary to a proposition. Among other things, it's a bit odd to start with a corollary in this section -- 
 %not clear it's corollary to what. - david} 
  is a refined version of \cite[Theorem 2.1]{Tang22}.
  
\begin{proposition}[Phase transitions of $\pi_{k,t}$]
\label{thm:2}
Let $N_0 = N$ be the total number of initial stakes.
Under the $\texttt{Poly}(\al)$ voting rule, we have
%\begin{enumerate}[itemsep = 3 pt]
\begin{itemize}
\item[(i)]
For $n_{k,0} = f(N)$ such that $f(N) \to \infty$ as $N \to \infty$ (i.e. $\pi_{k,0} \gg 1/N$),
and for each $\varepsilon > 0$  sufficiently small
and each $t \ge 1$ or $t = \infty$,
\begin{equation}
\label{eq:coninter}
\pr \left(\left|\frac{\pi_{k, t}}{\pi_{k,0}} - 1\right| > \varepsilon \right) \le \frac{1}{\varepsilon^2 f(N)},
\end{equation}
which converges to $0$ as $N \to \infty$.
\item[(ii)]
For $n_{k,0} = \Theta(1)$ (i.e. $\pi_{k,0} = \Theta(1/N)$), 
there is the convergence in distribution 
\begin{equation}
\pi_{k,\infty}/\pi_{k,0} \stackrel{d}{\longrightarrow} \frac{1}{n_{k,0}} \gamma(n_{k,0}) \quad \mbox{as } N \to \infty,
\end{equation}
where $\gamma(n_{k,0})$ is a Gamma random variable with density $x^{n_{k,0}-1} e^{-x} 1_{x > 0}/\Gamma(n_{k,0})$.
Moreover, there is $C > 0$ (independent of $t$ and $N$) such that
\begin{equation}
\label{eq:Waspikt}
d_W\left(\frac{\pi_{k,t}}{\pi_{k,0}}, \, \frac{1}{n_{k,0}} \gamma(n_{k,0})\right) \le C\left( N^3 t^{-\frac{1}{1+\alpha}} +  \frac{1}{\sqrt{N}}\right).
\end{equation}
\end{itemize}
%\end{enumerate}
\end{proposition}

\begin{proof}
(i) Conditioning on $N_t$ and using the law of total variance, we get
\begin{equation*}
\var(\pi_{k,t}) = \frac{1 - N \ex(N_t^{-1})}{N+1} \pi_{k,0} (1 - \pi_{k,0}).
\end{equation*}
It suffices to apply Chebyshev's inequality to get the bound \eqref{eq:coninter}.

(ii) Note that
\begin{align*}
d_W\left(\frac{\pi_{k,t}}{\pi_{k,0}}, \, \frac{1}{n_{k,0}} \gamma(n_{k,0})\right) & \le  d_W\left(\frac{\pi_{k,t}}{\pi_{k,0}}, \, \frac{1}{\pi_{k,0}}\Beta(n_{k,0},  N - n_{k,0})\right) \\
& \qquad + d_W\left(\frac{1}{\pi_{k,0}}\Beta(n_{k,0},  N - n_{k,0}), \, \frac{1}{n_{k,0}} \gamma(n_{k,0}) \right)
\end{align*} 
A careful application of \cite{GR13} yields a refinement of \eqref{eq:Steinrate}: there is $C > 0$ such that
$d_W(\pi^\dagger_{k,t}, \, Z) \le \frac{C N^3}{t}$.
Adapting the argument in Theorem \ref{thm:1} yields
\begin{equation}
\label{est1h}
d_W\left(\frac{\pi_{k,t}}{\pi_{k,0}}, \, \frac{1}{\pi_{k,0}}\Beta(n_{k,0},  N - n_{k,0})\right) \le C'N^3 t^{-\frac{1}{1+\alpha}} \quad 
\mbox{for some } C' > 0.
\end{equation}
Next we claim that
\begin{equation}
\label{est2h}
d_W\left(\frac{1}{\pi_{k,0}}\Beta(n_{k,0},  N - n_{k,0}), \, \frac{1}{n_{k,0}} \gamma(n_{k,0}) \right) \le \frac{C''}{\sqrt{N}} \quad \mbox{for some } C'' > 0,
\end{equation}
which can be proved by elementary calculus.
Here we provide a sketch of proof. 
Set $n_{k,0} = 1$ for simplicity. 
Let $X \sim \gamma(1)$, 
and let $X'$ be the sum of $N-1$ independent $\gamma(1)$ random variables, independent of $X$.
By beta-gamma algebra, 
$\frac{X}{X+X'}$ has the same distribution as $\Beta(1, N-1)$.
Thus,
\begin{equation}
\label{eq:coupling1}
d_W(N\, \Beta(1, N-1), \gamma(1)) \le \mathbb{E}\left|\frac{NX}{X + X'}  - X\right|.
\end{equation}
By normal approximation, 
we have $\frac{X+X'}{N} = 1 + \frac{1}{\sqrt{N}} \mathcal{N}(0,1) + o(N^{-\frac{1}{2}})$ 
where $\mathcal{N}(0,1)$ is standard normal (see \cite{Rio09}).
Injecting into \eqref{eq:coupling1} yields the desired bound.
Finally, combining the estimates \eqref{est1h} and \eqref{est2h} gives the bound \eqref{eq:Waspikt}.
\end{proof}

\quad The proposition reveals a phase transition in the stability of shares, and identifies large and small bidders, in terms of the size of their stakes, according to the categories in the two parts.
%The threshold to identify different types of bidders are of constant order $\Theta(1)$, which is independent of the parameter $\al$.
A large bidder $k$ is guaranteed to have stability, 
in the precise sense characterized in (\ref{eq:coninter}),
that the share ratio $\pi_{k, t}/\pi_{k,0}$ concentrates at $1$, and converges to $1$ 
in probability when $N\to \infty$, 
for any $t \ge 1$ (including $t = \infty$). 
For small bidders, this is reversed:
the concentration inequality in \eqref{eq:coninter}
becomes the anti-concentration inequality:
\begin{equation}
\pr \left( \left| \frac{\pi_{k, \infty}}{\pi_{k,0}} - 1\right| > \varepsilon \right) > c \quad \mbox{for } c > 0 \mbox{ independent of $\varepsilon$},
\end{equation}
implying volatility.
The Wasserstein bound \eqref{eq:Waspikt} is new, 
and it indicates that the ratio $\pi_{k,t}/\pi_{k,0}$ approaches the limiting Gamma distribution with an $N^{-\frac{1}{2}}$ error
for $t \ge N^{\frac{7}{2}(1+\alpha)}$. 
However, we do not know whether the ``$N^3$ dependence'' in \eqref{eq:Waspikt} is tight
so the ratio $\pi_{k,t}/\pi_{k,0}$ may mix at a faster rate.

%------------------------------------------
\subsection{Participation and trading}
\label{sc32}

%Cryptocurrencies are financial carriers of blockchains, which facilitates digit exchange of values.
%Thus, it is important to understand incentive of bidders to trade if allowed.
%Here we adapt the framework of
So far, we have not considered the possibility of allowing the bidders to 
trade stakes (among themselves). 
In the classical P\'olya urn model ($\al=0$), it is shown in \cite{RS21} that 
under certain conditions (which enforce some notion of ``risk neutrality''), there
will be no incentive for any bidder to trade. 
Here, we extend that to the $\texttt{Poly}(\al)$ model, allowing $\al$ to take
any non-negative values. Furthermore, we allow a bidder-dependent risk-sensitivity (or risk-averse)
parameter $\delta_k$, and study the issue of incentive as it relates to $\delta_k$.
%specify what are the conditions, in terms of $\delta_k$, that will remove any incentive for a bidder $k$ to even
%participate in the bidding process; and, for  does choose to participate, what will incentivize $k$ to 
%trade stakes. 
%In the latter case, the conditions will then characterize an environment in which
%the long-term behavior of bidder shares as specified in Theorems \ref{thm:1} and \ref{thm:2}
%will continue to hold even when trading is allowed.

\quad In the new setting of allowing trading, we need to modify the problem formulation presented
at the beginning of \S\ref{sc2}.
%Before proceeding further, we need a few notations.
First, for each $k \in [K]$,
let $\nu_{k,t}$ be the number of stakes that bidder $k$ will trade at time $t$.
Then, instead of \eqref{eq:TP1}, the number of stakes $n_{k,t}$ evolves as
\begin{equation}
\label{eq:TP3}
n_{k, t} = \underbrace{n_{k, t-1} + 1_{S_{k,t}}}_{n'_{k,t}} + \nu_{k,t},
\end{equation}
i.e. $n'_{k,t}$ denotes the number of stakes bidder $k$ owns in between 
time $t-1$ and $t$, excluding those traded in period $t$.

\quad Note that $\nu_{k,t}$ will be up to bidder $k$ to decide, as opposed to the random
event $S_{k,t}$ which is exogenous; 
in particular, $\nu_{k, t}$ can be negative (as well as positive or zero). 
We will elaborate more on this below, but note that  $\nu_{k,t}$ will be constrained such
that after the updating in (\ref{eq:TP3}) $n_{k, t}$ will remain nonnegative.
 %This way, the dynamics of the voting power $\theta_{k,t}$ in (\ref{eq:powert})
 %and of the volume $N_t$ in (\ref{eq:Nt}) will stay the same.

\quad Let $\{P_t, \, t \ge 0\}$ be the price process of each (unit of) stake,
 which is a stochastic process assumed to be independent of the 
randomness induced by the $\texttt{Poly}(\al)$ voting rule (specifically, the process $\{S_{k,t}\}$). 
Hence, we augment the filtration $\{\mathcal{F}_t\}_{t \ge 0}$ 
with that of the exogenous price process $\{P_t, \, t \ge 0\}$ to a new filtration denoted
$\{\mathcal{G}_t\}_{t \ge 0}$.
Note that the price process $P_t$ is also assumed as exogenous in 
\cite{RS21}. % which is the main motivation of our results in Theorem \ref{thm:3} below.
This assumption need not be so far off, as the crypto's price tends to be affected by 
 market shocks 
(such as macroeconomics, geopolitics, breaking news, etc) much more than by trading activities.
%First, in many crypto-activities, the price is ``irrational'', and is affected more by market shocks 
%(e.g. macroeconomics, geopolitics, breaking news$\ldots$) than by any individual's trading. 
So, here we isolate the price of each stake from any bidder's trading impact.

\quad Let $b_{k,t}$ denote (units of) the risk-free asset that bidder $k$ holds at time $t$, 
and $r_{\tiny \mbox{free}}> 0$ the risk-free (interest) rate.
(Here, the risk-free asset is naturally the one that underlies the above 
price process.) 
%Let $r_{\tiny \mbox{free}}> 0$ denote the risk-free (interest) rate.
As we are mainly concerned with the effect of exchanging stakes to each individual, 
we allow bidders to trade stakes only internally among themselves, but not risk-free assets between them.
Hence, each bidder has to trade risk-free asset with a third-party instead of trading that with another bidder.

\quad The decision for each bidder $k$ at $t$ is hence a tuple, $(\nu_{k,t}, b_{k,t})$.
Moreover, there is a terminal time, denoted $T_k \in \mathbb{N}_{+}$ (i.e., $T_k\ge 1$ is integer valued), 
by which time 
bidder $k$ has to sell all assets, including both any risk-free asset and any stakes owned at that time, 
and leave the system.
$T_k$ can either be deterministic or random. 
In the latter case, assume it has a finite expectation,
and is either adapted to $\{\mathcal{G}_t\}_{t \ge 0}$,
or independent of all other randomness (in which case augment $\{\mathcal{G}_t\}$ accordingly).  
We also allow bidder $k$ to leave the system and liquidate prior to $T_k$ at a stopping time $\tau_k$ 
relative to $\{\mathcal{G}_t\}_{t \ge 0}$.
Thus, bidder $k$ will also decide at which time $\tau_k$ to stop and exit.
To simplify the notation, we abuse $\tau_k$ for $\tau_k \wedge T_k$, the minimum of $\tau_k$ and $T_k$.

%and assume it has a finite expectation. 
%In the latter case, we augment $\{\mathcal{G}_t\}_{t \ge 0}$ to include the randomness of $T_k$.
%Denote bidder $k$'s decision (process) as $\tau_k:=\{(\nu_{k,t}, b_{k,t}), \, t\le T_k\}$. 
 
\quad Let $c_{k,t}$ denote the (free) cash flow  (or, ``consumption'') of bidder $k$ at time $t$, i.e.,
\begin{equation*}
 c_{k,t} = (1+r_{\tiny \mbox{free}}) b_{k, t-1} - b_{k, t}  - \nu_{k,t}P_t, 
% \quad  \mbox{ with } b_{k,0} = 0, \, b_{k,t} \ge 0, \; 
 \quad \forall 1\le t< \tau_k; \tag{C1}
\end{equation*}
with 
\begin{equation*}
b_{k,0} = 0, \; b_{k,t} \ge 0,\quad
 0 \le n_{k,t} = n'_{k,t} + \nu_{k,t} \le N_t, \quad \forall 1\le t< \tau_k; \tag{C2}
\end{equation*} 
and
\begin{equation*}
c_{k, \tau_k} = (1+ r_{\tiny \mbox{free}}) b_{k, \tau_k- 1} + n'_{k, \tau_k} P_{\tau_k}, %\tag{C2}
 \quad \mbox{ and } \nu_{k,\tau_k}=b_{k, \tau_k} = 0.  \tag{C3} 
\end{equation*}

\quad Observe that the equation in (C1) simply defines what's available for ``consumption'' %(i.e., free cash)
in period $t$.
It is simply an accounting or budget constraint on the cash flow.
 The requirements in (C2) are all in the spirit of disallowing shorting, on both components
 of the decision, the free asset $b_{k,t}$ and 
the traded stakes $\nu_{k,t}$. In particular the latter is constrained such that 
$\nu_{k,t} \ge -n'_{k,t}$ (following $n_{k,t} \ge 0$) i.e., bidder $k$ cannot sell more
than what's in possession at $t$; it also ensures that 
no bidder can own a number of stakes beyond the current total volume ($n_{k,t} \le N_t$).
(C3) specifies how the assets are liquidated at the exit time $\tau_k$: 
both $\nu_{k,\tau_k}$ and $b_{k, \tau_k}$ will be set at zero, 
and all remaining stakes $n'_{k, \tau_k}$ liquidated (cashed out at $P_{\tau_k}$ per unit).
%As we will see, the constraints (C1), (C3) and $b_{k,0} = 0, \; b_{k,t} \ge 0$ in (C2) play an important role in the analysis of each bidder's incentive to participate or to trade;
%while the constraint $0 \le n_{k,t} = n'_{k,t} + \nu_{k,t} \le N_t$ in (C2) is only for the modeling purpose.

\quad Denote bidder $k$'s decision (process) or ``strategy''  as $\tau_k$ and $(\nu,b):=\{(\nu_{k,t}, b_{k,t}), \, 1\le t\le \tau_k\}$. 
The objective of bidder $k$ is 
\begin{equation}
\label{eq:OPT}
U^*_k := \max_{\tau_k, (\nu ,b)}  U_k :=%\max_{(\nu_k,b)} 
\max_{\tau_k, (\nu ,b)}  \ex \left(\sum_{t=1}^{\tau_k} \delta_k^{t} c_{k,t} \right), 
\quad \text{subject to (C1), (C2), (C3)} ;
\end{equation}
where $\delta_k \in (0,1]$ is a discount factor, a given parameter measuring the risk sensitivity of bidder $k$.
Clearly, bidder $k$'s objective is to maximize a utility that is just 
the present value of $k$'s total cash flow cumulated up to $T_k$.

%To prove Theorem \ref{thm:3}, we need to introduce two more processes 
\quad We need to introduce two more processes 
%that are related and central to understanding the dynamics of the system in the presence of trading.  
that are related and central to understanding the dynamics of the system in the presence of trading.  
The first one is $\{M_{t}, \, t\ge 1\}$, where
%a lemma that concerns
%Let %For $t \ge 0$, let 
%\begin{equation}
$M_t := N_t P_t $ % \qquad t\ge 1, 
% = \pi_{k,t} (n'_{k,t} +\nu_{k,t}) P_t ,
%\end{equation}
denotes the market value of the volume of stakes at time $t$.
The second one is  $\{\Pi_{k,t}, \, t\ge 0\}$, for each bidder $k$, defined as follows:
\begin{equation}
\label{eq:Pi}
\Pi_{k,0} := n_{k,0} P_0, \quad \mbox{and} \quad 
\Pi_{k,t} := \delta^t_k n'_{k,t} P_t - \sum_{j=1}^{t-1} \delta_k^j \nu_{k, j} P_j , \quad t \ge 1;
\end{equation}
where $n'_{k,t+1}$ follows (\ref{eq:TP3}).
Note the two terms that define $ \Pi_{k,t}$ are the discounted present values, respectively, 
of $k$'s pre-trading stakes ($n'_{k,t}$) and of the return from $k$'s trading (cumulated up to $t-1$).
%(For instance, if $k$ sells stakes, then $\nu_{k, j}$ is negative, and $-\nu_{k, j}P_j$ is the return.) 

\quad The connection between $\{M_t\}$ and $\{\Pi_{k,t}\}$ is presented in the following lemma,
which reveals that their incremental gains (per time period) are proportional:
each increment of $\Pi_{k,t}$ is a $\pi_{k,t}$ 
fraction of  the corresponding increment of $M_t$. 
In other words, $\pi_{k,t}$ not only represents bidder $k$'s share of the total volume 
of stakes, it also represents $k$'s share of the system's market value, with or without trading.
%This is the key insight for understanding Theorem \ref{thm:3}.

\begin{lemma}
\label{lem:utility}
Under the $\texttt{Poly}(\al)$ voting rule, along with the trading specified above, we have
%\begin{equation}
%\label{eq:mag}
%\ex(n'_{k,t+1} \,|\, \mathcal{F}_t) = \pi_{k,t} \ex(N_{t+1} \,|\, \mathcal{F}_t).
%\end{equation}
%Consequently,
\begin{equation}
\label{eq:ut}
\ex(\Pi_{k,t+1} \,|\, \mathcal{G}_t) - \Pi_{k,t} 
= \delta_k^{t+1}\pi_{k,t} \ex(M_{t+1} \,|\, \mathcal{G}_t) - \delta_k^t \pi_{k,t} M_t .
%\ex(n'_{k, t+1} P_{t+1} - n_{k,t} P_t \,|\, \mathcal{G}_t) = \pi_{k,t} (\ex(M_{t+1} \,|\, \mathcal{G}_t ) - M_t).
\end{equation}
\end{lemma}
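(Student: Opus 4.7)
The plan is to compute $\Pi_{k,t+1}-\Pi_{k,t}$ directly from the definition in \eqref{eq:Pi}, substitute the one-step update for $n'_{k,t+1}$, take conditional expectation with respect to $\mathcal{G}_t$, and match terms with the right-hand side. From \eqref{eq:Pi}, the telescoping sum over $j$ leaves only a single $j=t$ term, giving
\begin{equation*}
\Pi_{k,t+1} - \Pi_{k,t} = \delta_k^{t+1}\, n'_{k,t+1} P_{t+1} - \delta_k^t\, n'_{k,t} P_t - \delta_k^t\, \nu_{k,t} P_t.
\end{equation*}
From \eqref{eq:TP3} applied at time $t+1$, I would then write $n'_{k,t+1} = n_{k,t} + 1_{S_{k,t+1}}$, and use $n'_{k,t} + \nu_{k,t} = n_{k,t}$ to combine the last two terms into $-\delta_k^t n_{k,t} P_t$. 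Noting that $n_{k,t} P_t = \pi_{k,t} N_t P_t = \pi_{k,t} M_t$ recovers the $-\delta_k^t \pi_{k,t} M_t$ piece of the target identity.

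The next step is to handle the surviving term $\delta_k^{t+1}(n_{k,t} + 1_{S_{k,t+1}}) P_{t+1}$ under $\ex(\cdot \mid \mathcal{G}_t)$. Since $n_{k,t}$ is $\mathcal{G}_t$-measurable and the price process is by assumption independent of the bidding randomness, $P_{t+1}$ and $1_{S_{k,t+1}}$ are conditionally independent given $\mathcal{G}_t$; together with $\ex(1_{S_{k,t+1}}\mid \mathcal{G}_t) = \th_{k,t} = \pi_{k,t}/N_t^\alpha$ from \eqref{eq:powert}, this yields
\begin{equation*}
\ex\!\left(\delta_k^{t+1}(n_{k,t} + 1_{S_{k,t+1}}) P_{t+1}\,\big|\,\mathcal{G}_t\right) = \delta_k^{t+1}\bigl(n_{k,t} + \th_{k,t}\bigr)\,\ex(P_{t+1}\mid \mathcal{G}_t).
\end{equation*}

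The final step is the arithmetic identity that makes the lemma click. Using \eqref{eq:Nt} I have $\ex(N_{t+1}\mid \mathcal{G}_t) = N_t + N_t^{-\alpha}$, so by the independence of $P_{t+1}$ and $N_{t+1}$ given $\mathcal{G}_t$,
\begin{equation*}
\pi_{k,t}\,\ex(M_{t+1}\mid \mathcal{G}_t) = \pi_{k,t}\bigl(N_t + N_t^{-\alpha}\bigr)\ex(P_{t+1}\mid \mathcal{G}_t) = \bigl(n_{k,t} + \th_{k,t}\bigr)\ex(P_{t+1}\mid \mathcal{G}_t),
\end{equation*}
since $\pi_{k,t} N_t = n_{k,t}$ and $\pi_{k,t}/N_t^\alpha = \th_{k,t}$. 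Multiplying by $\delta_k^{t+1}$ matches the surviving term above, completing the identity. I do not expect genuine obstacles here; the only subtle point is the careful use of conditional independence of the exogenous price process from $\mathcal{F}_t$-measurable quantities, which is precisely what justifies factoring out $\ex(P_{t+1}\mid \mathcal{G}_t)$, and making sure the decision variable $\nu_{k,t}$ (which is $\mathcal{G}_t$-measurable under any admissible strategy) is correctly absorbed into $n_{k,t}$ before conditioning.
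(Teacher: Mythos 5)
Your proof is correct and follows essentially the same route as the paper's: telescope the definition of $\Pi_{k,t}$ to isolate the increment, use $n'_{k,t+1}=n_{k,t}+1_{S_{k,t+1}}$ together with the independence of the price process to compute $\ex(n'_{k,t+1}P_{t+1}\mid\mathcal{G}_t)=\pi_{k,t}\ex(M_{t+1}\mid\mathcal{G}_t)$, and absorb $\nu_{k,t}$ via $n'_{k,t}+\nu_{k,t}=n_{k,t}=\pi_{k,t}N_t$. The only cosmetic difference is that you factor $\ex(M_{t+1}\mid\mathcal{G}_t)$ into $\ex(N_{t+1}\mid\mathcal{G}_t)\ex(P_{t+1}\mid\mathcal{G}_t)$ and verify the identity termwise, which is the same use of independence the paper makes in one step.
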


\begin{proof}
%By \eqref{eq:Nt}, we have $\ex(N_{t+1} \,|\, \mathcal{F}_t) = N_t + N_t^{-\al}$.Further 
First, by \eqref{eq:TP3} and \eqref{eq:TP2}, along with $\pi_{k,t}= n_{k,t}/N_t$, we have
%(\ref{eq:mag}) readily follows:
\begin{equation}
\label{eq:mag}
\ex(n'_{k,t+1} \,|\, \mathcal{F}_t) = n_{k,t} (1 + N_t^{-(1+\al)}) 
= \frac{n_{k,t}}{N_t} (N_t + N_t^{-\al}) = \pi_{k,t} \ex(N_{t+1} \,|\, \mathcal{F}_t).
\end{equation}
Next, from \eqref{eq:Pi}, we have
\begin{equation}
\label{eq:Pi1}
\Pi_{k,t+1}- \Pi_{k,t} = \delta^{t+1}_k n'_{k,t+1} P_{t+1} - \delta^t_k n'_{k,t} P_t 
- \delta_k^t \nu_{k, t} P_t,  \quad t \ge 1.
\end{equation}
Furthermore, as the price process $(P_t, \, t \ge 0)$ is independent of $\mathcal{F}_t$, 
we have
\begin{align*}
\ex(n'_{k,t+1} P_{t+1} \,|\, \mathcal{G}_t) &= \ex(\ex(n'_{k, t+1} \,|\, \mathcal{F}_t) P_{t+1} \,|\, \mathcal{G}_t) \\
& \stackrel{\eqref{eq:mag}}{=} \pi_{k,t} \ex(N_{t+1}P_{t+1} \,|\, \mathcal{G}_t) = \pi_{k,t} \ex(M_{t+1} \,|\, \mathcal{G}_t).
\end{align*}
This, along with \eqref{eq:Pi1} yields the desired expression in \eqref{eq:ut}, along with 
$n_{k,t}= n'_{k,t}+\nu_{k,t}$, $n_{k,t} = \pi_{k,t} N_t$, and $M_t=N_tP_t$. 
\end{proof}

\quad The process $\{\Pi_{k,t}\}$ also connects to the utility $U_k$ in \eqref{eq:OPT}.
To see this, summing up both sides of (C1) and (C3) over $t$ (along with $b_{k,0} = 0$ in (C2)), we have
\begin{equation}
\label{eq:ut1}
\sum_{t \le \tau_k} \delta^{t}_k c_{k,t} = \sum_{t \le \tau_k} \delta^{t}_k c_{k,t} = \delta_k^{\tau_k} n'_{\tau_k} P_{\tau_k}
-\sum_{t =1}^{\tau_k-1} \delta^{t}_k \nu_{k,t} P_t
+\sum_{t =1}^{\tau_k-1}\delta_k^t \left[(1+r_{\tiny \mbox{free}}) \delta_k - 1\right] b_{k,t}. 
\end{equation}
Observe that the first two terms on the RHS are equal to $\Pi_{k,\tau_k}$,  so we can rewrite the above as
follows (after taking expectations on both sides), emphasizing the exit time $\tau_k$ and the strategy $(\nu,b)$, 
\begin{equation}
\label{eq:uk}
U_k (\tau_k, \nu,b) %:=\sum_{t \le T_k} \delta^{t}_k c_{k,t}
% = \delta_k^{T_k} n'_{T_k} P_{T_k}
%-\sum_{t =1}^{T_k-1} \delta^{t}_k \nu_{k,t} P_t
=\ex \left[ \Pi_{k,\tau_k} (\nu)\right] + \ex\left( \sum_{t =1}^{\tau_k -1}\delta_k^t \left[(1+r_{\tiny \mbox{free}}) \delta_k - 1\right] b_{k,t}\right);  
\end{equation}
hence, the RHS above is {\it separable}: 
%in terms of the two components of the strategy $(\nu,b)$: 
the first term %$\Pi_{k,T_k}$ 
depends on $(\nu)$ only while the second term, the summation,  on $(b)$ only.
Furthermore,  the second term is $\le 0$ provided
$(1+r_{\tiny \mbox{free}}) \delta_k \le 1$ (which is the condition (a) assumed in Theorem \ref{thm:3} below),  
along with $b$ being non-negative, part of the feasibility in (C2). 
In this case, we will have $U_k\le\ex ( \Pi_{k,\tau_k}(\nu))$, 
which implies $U^*_k \le\max_{\tau_k, \nu} \ex( \Pi_{k,\tau_k}(\nu))$,
with equality holding when $b_{k,t}=0$ for all $t=1, \dots, \tau_k$.
%In other words, under condition (a) in \eqref{condM}, the class of feasible strategies $(\nu,b)$ can 
%be reduced to $(\nu,b=0)$.

%Also note that the no-trading strategy, i.e., setting $\nu_{k,t}= 0$ for all $t$ (which will
%imply $b_{k,t} = 0$ for all $t$ as well), is a feasible strategy, 
%in the sense of satisfying all three constraints. 
%Clearly, the no-trading strategy yields $U_k=n_{k,T_k} P_{T_k}$
%(which will also become evident in the analysis below). 

\quad We are now ready to present the main result 
regarding the utility maximization problem in  \eqref{eq:OPT}. 
A quick word on the parameter $r_{\tiny \mbox{cryp}}$ that will appear prominently
in Theorem \ref{thm:3} below. 
Simply put, it is the rate (expected rate of return) associated with 
each stake (e.g., a unit of some cryptocurrency), i.e., it is
the counterpart of  $r_{\tiny \mbox{free}}$, the rate for the risk-free asset.
%and its value is determined by the condition (b) assumed in the theorem, the second equality in \eqref{condM}. 
%For all practical purpose, one assume $r_{\tiny \mbox{crpt}}\ge r_{\tiny \mbox{free}}$, even though not assume in the theorem.
We will elaborate more on the two rates after proving the theorem.
In the theorem, two strategies are singled out:
%the ``no-trading'' strategy, $\tau_k \ge 1$ and $\nu_{k,t}=b_{k,t}=0$ for all $t \ge 1$,  % \dots, T_K$, 
the ``buy-out'' strategy, in which bidder $k$ buys up all stakes available at time $1$, and then 
participates in the bidding process until the end;
%turns all $n_{k,0}$ stakes
%$\tau_k \ge 1$, $b_{k,t} = 0$ for $t \ge 1$ and $\nu_{k,1} = N_1 - n'_{k,1}$, $\nu_{k,t} = 0$ for $t \ge 2$,}
%which is one instance of the class of feasible strategies;
and the ``non-participation'' strategy, in which bidder $k$ turns all $n_{k,0}$ stakes into cash,
%(an amount that equals $\Pi_{k,0}=n_{k,0}P_0$) at $t=0$, 
and then never participates in either bidding or trading for all $t\ge 1$.
Note that the non-participation strategy is executed at $\tau_k=0$; as such, it complements the feasible class, which is  
for $ \tau_k \ge 1$ and presumes participation.
The buy-out strategy clearly belongs to the feasible class.

\begin{theorem}[Buy-out strategy versus non-participation]
\label{thm:3}
Assume the following two conditions: 
\begin{equation}
\label{condM}
{\rm (a)}\;\; \delta_k(1+r_{\tiny \mbox{free}}) \le 1 \quad{\rm and}\quad
{\rm (b)} \;\; \ex(M_{t+1} \,|\, \mathcal{G}_t) = (1+r_{\tiny \mbox{cryp}}) M_t.
\end{equation}
Then, under the $\texttt{Poly}(\al)$ voting rule, the following results will hold.

\quad First, with condition (a), the maximal utility
$U^*_k$ is achieved by setting $b_{k,t}=0$ for all $t=1, \dots, T_k$; i.e.,
$U^*_k =\max_\nu \ex( \Pi_{k,T_k})$.
%In other words, the class of feasible strategy $(\nu b)$, can be reduced to $(\nu, b=0):=\nu$. 

\quad In addition, all three parts of the following will hold. 
%(Below, ``any strategy'' refers to any feasible strategy, including the no-trading strategy $((\nu=0, b=0)$, but not
%the non-anticipation strategy, for reasons explained earlier.)
 % (including all three parts) will hold.

%Suppose the following conditions hold:
%\begin{eqnarray}
%\ex(M_{t+1} \,|\, \mathcal{G}_t) = (1+r_{\tiny \mbox{cryp}}) M_t,
%\label{condM} \\
%\end{equation}
%and that

%\end{eqnarray}
\begin{enumerate}

\item[(i)] If
%Assume that
%\begin{equation}
%\label{eq:incassump2}
 $\delta_k  (1+r_{\tiny \mbox{cryp}}) \le 1$,
%\quad\mbox{and} \quad 
%\delta_k(1+r_{\tiny \mbox{free}}) \le 1.
%\end{equation}
then %under the $\texttt{Poly}(\al)$ voting rule, 
any feasible strategy %$\tau_k$  $(\nu_{k,t}, \, t \ge 0)$ 
will provide no greater utility for bidder $k$ than the non-participation strategy, 
%i.e. liquidate at time $t = 0$
 i.e., $U^*_k \le n_{k,0}P_0$.
%\begin{equation}
%\label{eq:utilityll}
%U_k \le \Pi_{k,0}  = n_{k,0} P_0.
%\end{equation}

\item[(ii)] If
%Assume that
%\begin{equation}
%\label{eq:incassump200}
$ \delta_k  (1+r_{\tiny \mbox{cryp}}) \ge 1$,
then any feasible strategy will provide no greater utility for bidder $k$ than the buy-out strategy.
In this case, bidder $k$ will buy all available stakes at time $1$, and participate in the bidding process until the terminal time $T_k$.
%\quad\mbox{and} \quad 
%\delta_k(1+r_{\tiny \mbox{free}}) \le 1.
%\end{equation}
%then %under the $\texttt{Poly}(\al)$ voting rule, 
%the non-participation strategy will provide no greater utility for bidder $k$ than
%the no-trading strategy. 
%In this case, bidder $k$ will , i.e., $\tau_k = T_k$.
%(The latter will be no better than the optimal strategy, which requires solving $\max_\nu \ex( \Pi_{k,T_k})$.
% $\{ (\nu_{k,t}=0,b_{k,t}=0)\, 1\le t\le T_k\}$, 
%will provide 
%no less utility for bidder $k$ than . 
%Specifically, 
%\begin{equation}
%\label{eq:ut200}
%U_k \le \max_\nu\ex(\Pi_{k, T_k}) = \Pi_{k,0} = n_{k,0} P_0.
%U_k \le \ex(\Pi_{k, T_k}) \quad \mbox{but} \quad \ex(\Pi_{k,T_k}) \ge \Pi_{k,0} = n_{k,0} P_0.
%\end{equation}

\item[(iii)] If
%Assume that 
%\begin{equation}
%\label{eq:incassump20}
% \delta_k  = (1+r_{\tiny \mbox{cryp}})^{-1}
 $\delta_k (1+r_{\tiny \mbox{cryp}})=1$, 
%\quad\mbox{and} \quad 
%\delta_k(1+r_{\tiny \mbox{free}}) \le 1.
%\end{equation}
then, %under the $\texttt{Poly}(\al)$ voting rule, 
 % $(\nu_{k,t}, \, t \ge 0)$ 
%provides no greater utility for bidder $k$ than the non-participation or the no-trading strategy,
bidder $k$ is indifferent between
the non-participation and the buy-out strategy with any exit time, both of which will
provide no less utility than any feasible strategy.
In other words,  
all strategies achieve the same utility (which is $\Pi_{k,0}=n_0P_{k,0}$).
%Specifically, 
%\begin{equation}
%\label{eq:ut20}
%U_k \le \max_\nu\ex(\Pi_{k, T_k}) = \Pi_{k,0} = n_{k,0} P_0.
%\end{equation}

\end{enumerate}
%In summary, bidder $k$ is better off taking the non-participation strategy in case (i); and
%in case (iii),  
%taking either the non-participation strategy or the no-trading strategy (no difference in utility between the two).  
%In case (ii), bidder $k$ is better off to participate and then either take a no-trading 
%strategy or, even better, to trade following 
%an optimal $\nu^*$ along with $\bf{b}=0$---provided $\nu^*$ can be identified. 

\quad Moreover, when
$\delta_k =\delta:= (1+r_{\tiny \mbox{cryp}})^{-1}$ for all $k$, % which is case (iii),
%\begin{equation}
%(1+r_{\tiny \mbox{free}}) \delta_k \le 1 \quad \mbox{and} \quad (1+r_{\tiny \mbox{cryp}}) \delta_k \le 1.
%\end{equation}
then no bidder will have any incentive to trade. Consequently,
the long-term behaviors (of $N_t$, $\pi_{k,t}$ and $\theta_{k,t}$) characterized  
in Proposition \ref{prop:Nt}, Theorem \ref{thm:1} and Proposition \ref{thm:2} will hold.
%Specifically, 
%\begin{equation}
%\label{eq:utilityll0}
%U_k \le \ex(\delta_k^{T_k}  n'_{k, T_k} P_{T_k}) = n_{k,0} P_0.
%\end{equation}
%by letting $\delta:=\sup_k \delta_k$, and assuming 
\end{theorem}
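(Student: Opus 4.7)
The plan is to build on Lemma \ref{lem:utility} and the separable decomposition \eqref{eq:uk} of $U_k$, reducing the entire analysis to the stopped expectation of the auxiliary process $\{\Pi_{k,t}\}$, whose martingale type is controlled by the sign of $\delta_k(1+r_{\tiny\mbox{cryp}})-1$.

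First I would dispatch the opening claim about $b_{k,t}=0$. In \eqref{eq:uk}, the second summand is $\sum_{t=1}^{\tau_k-1}\delta_k^t\bigl[(1+r_{\tiny\mbox{free}})\delta_k-1\bigr]b_{k,t}$, which under condition (a) has a non-positive multiplier while $b_{k,t}\ge 0$ by (C2). Hence this term is $\le 0$, is maximized at $b_{k,t}\equiv 0$, and drops out, giving $U_k^{*}=\sup_{\tau_k,\nu}\ex[\Pi_{k,\tau_k}(\nu)]$ as asserted.

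Next I would plug condition (b) into \eqref{eq:ut} to obtain the one-step drift
\begin{equation*}
\ex(\Pi_{k,t+1}\mid\mathcal{G}_t)-\Pi_{k,t}=\delta_k^{t}\pi_{k,t}M_t\bigl[\delta_k(1+r_{\tiny\mbox{cryp}})-1\bigr],
\end{equation*}
which, together with $\Pi_{k,0}=n_{k,0}P_0$, identifies $\{\Pi_{k,t}\}$ as a $\mathcal{G}_t$-supermartingale in case (i), a $\mathcal{G}_t$-submartingale in case (ii), and a true martingale in case (iii). Optional stopping at any $\mathcal{G}_t$-stopping time $\tau_k\le T_k$ then yields, respectively, $\ex[\Pi_{k,\tau_k}]\le \Pi_{k,0}=n_{k,0}P_0$, $\ex[\Pi_{k,T_k}]\ge \Pi_{k,0}=n_{k,0}P_0$, and $\ex[\Pi_{k,\tau_k}]=n_{k,0}P_0$. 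In (i) this is exactly the non-participation payoff, delivering $U_k^{*}\le n_{k,0}P_0$; in (ii) the no-trading policy $\nu_{k,t}\equiv 0$ is feasible (trivially satisfies (C1)--(C3)), gives $\Pi_{k,t}=\delta_k^t n_{k,t}P_t$, and already inherits the submartingale inequality, so it dominates non-participation, with $\tau_k=T_k$ optimal because every additional step has non-negative expected incremental gain; in (iii) all feasible strategies and non-participation produce the same utility $n_{k,0}P_0$.

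The last assertion follows immediately: when $\delta_k=(1+r_{\tiny\mbox{cryp}})^{-1}$ for every $k$, case (iii) applies bidder by bidder, so nobody has a strict trading incentive, hence the no-trading policy is an equilibrium and the system reduces to the pure $\pol$ dynamics of Section~\ref{sc2}, making Theorems~\ref{thm:1} and~\ref{thm:2} directly applicable.

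The main technical obstacle I anticipate is justifying optional stopping for $\{\Pi_{k,t}\}$ when $\tau_k$ is random and only $\ex\tau_k\le\ex T_k<\infty$ is available; since $\pi_{k,t}\le 1$ and $\delta_k^t\le 1$, the one-step conditional $L^1$-increments are bounded by a constant multiple of $M_t$, and a standard truncation of $\tau_k\wedge n$ followed by monotone/dominated convergence (using $\ex T_k<\infty$ and the $\mathcal{G}_t$-adaptedness of $M_t$) should close the argument; the only subtlety is making sure the feasibility constraint $0\le n_{k,t}\le N_t$ in (C2) does not restrict $\nu$ enough to invalidate the supremum interpretation in (i), which I would check by noting the null strategy is always feasible and the supermartingale inequality is pathwise.
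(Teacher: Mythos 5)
Your proposal is correct and follows essentially the same route as the paper: use the separable decomposition \eqref{eq:uk} with condition (a) to reduce the problem to maximizing $\ex(\Pi_{k,\tau_k})$ over $(\tau_k,\nu)$, then combine Lemma \ref{lem:utility} with condition (b) to identify $\{\Pi_{k,t}\}$ as a super-, sub-, or true martingale according to the sign of $\delta_k(1+r_{\tiny \mbox{cryp}})-1$, and conclude by optional stopping against $\Pi_{k,0}=n_{k,0}P_0$. Your added care in justifying optional stopping for a random $\tau_k$ with $\ex T_k<\infty$ is a point the paper's proof passes over silently, but it does not change the argument.
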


%\quad Before proving the theorem, we make a few comments.

%\quad The proof of Theorem \ref{thm:3} is based on the following lemma regarding the utility gain of a bidder.
%It shows that the utility gain formula, which is true for the P\'olya urn model, also holds for the $\texttt{Poly}(\al)$ model.

%\quad We are now ready to prove Theorem \ref{thm:3}.

\begin{proof}
That 
$U^*_k = \max_{\tau_k, \nu} \ex( \Pi_{k,\tau_k})$ (with $b_{k,t}$ being set to $0$ for all $t$), %which is assumed throughout below),
under condition (a) in \eqref{condM}, has already been established in the discussions following \eqref{eq:uk}. 
So, it suffices to prove the three parts (i)-(iii).  

\smallskip

 %[Proof of Theorem \ref{thm:3}]
(i) Applying the given
condition (b) in \eqref{condM}, along with the assumed inequality  $\delta_k(1+r_{\tiny \mbox{cryp}}) \le 1$,
to the RHS of the equation in \eqref{eq:ut} will make it $\le 0$; i.e.,, 
%from the equality in \eqref{eq:ut}, % in Lemma \ref{lem:utility}, 
%along with the assumed 
% condition \eqref{condM} and the first inequality in \eqref{eq:incassump2}, 
%it is readily verified that  
$\{\Pi_{k,t}\}$ is a $\mathcal{G}_t$-super-martingale, implying $\ex(\Pi_{k, \tau_k}) \le \Pi_{k,0}$. 
%The proof is similar to \cite[Proposition 7]{RS21}.
%Given a trading strategy $(\nu_{k,t}, \, t \ge 0)$, we define a process $\Pi_{k,t}$ by
%\begin{equation*}
%\Pi_{k,0} = n_{k,0} P_0 \quad \mbox{and} \quad 
%\Pi_{k,t} = \delta^t_k n'_{k,t} P_t - \sum_{j \le t-1} \delta_k^j \nu_{k, j} P_j \mbox{ for } t \ge 1.
%\end{equation*}
%It is easy to check by \eqref{eq:ut} that 
%\quad Next, summing up both sides of (C1) and (C3) over $t$ (along with $b_{k,0} = 0$ in (C2)), we have
%\begin{equation}
%\label{eq:ut1}
%\sum_{t \le T_k} \delta^{t}_k c_{k,t} = \delta_k^{T_k} n'_{T_k} P_{T_k}
%-\sum_{t =1}^{T_k-1} \delta^{t}_k \nu_{k,t} P_t
%+\sum_{t =1}^{T_k-1}\delta_k^t \left[(1+r_{\tiny \mbox{free}}) \delta_k - 1\right] b_{k,t}. 
%\end{equation}
%Observe that the first two terms on the RHS are equal to $\Pi_{k,T_k}$;
% and the last summation on the RHS is $\le 0$ following the given condition (a),
% along with $b_{k,t} \ge 0$, which is part of the feasibility in (C2). 
 %  along with the second inequality
%in the given condition 
%in \eqref{eq:incassump2}, 
%$\ex(\Pi_{k,t+1} \,|\, \mathcal{G}_t) - \Pi_{k,t} = \pi_{k,t}(\delta_k^{t+1} \ex(M_{t+1} \,|\, \mathcal{G}_t) - \delta_k^t M_t)$.
%By the fact that $\delta_k (1+r_{\tiny \mbox{cryp}}) \le 1$, we conclude that $(\Pi_t, \, t \ge 0)$ is a $\mathcal{G}_t$-super-martingale.
%Finally, by the budget and liquidation constraints,
%and the assumption that $\delta_k (1+r_{\tiny \mbox{free}}) \le 1$, 
Since $\Pi_{k,0}$ is independent of $\nu$, we have 
%\begin{align*}
\begin{equation}
\label{eq:ut2}
U^*_k = \max_{\tau_k, \nu} \ex(\Pi_{k, \tau_k}) \le \Pi_{k,0} = n_{k,0}P_0,
 \end{equation}
%& = \ex \left( \Pi_{k,T_k} + \sum_{j \le T_{k}-1} ((1+r_{\tiny \mbox{free}}) \delta_k^{j+1} - \delta_k^j) b_{k,j} \right) \\
%& \le \ex(\Pi_{k, T_k}) \le \Pi_{k,0},
%\end{align*}
%and the inequality follows from the super-martingale property of $\Pi_{k,t}$ (along with
%the optional stopping theorem). % Also note
%and the first inequality in \eqref{eq:ut2} 
%holds for any feasible $\{b_{k,t}\}$.
%and the second inequality holds for any feasible $\{\nu_{k,t}\}$.
%This completes the proof of part (i), which corresponds to the no-participation strategy.
%Also note that $\Pi_{k,0} = n_{k,0}P_0$ is the objective value ($U_k$) corresponding
%to the no-trading strategy.

(ii) 
%Under the condition \eqref{condM} and the first equality in \eqref{eq:incassump200}, 
With the assumed inequality $\delta_k(1+r_{\tiny \mbox{cryp}}) \ge 1$,
%the process 
$\{\Pi_{k,t}\}$ now becomes a $\mathcal{G}_t$-sub-martingale; and
hence, the inequality below, 
%(The equality is the same as in (i).  % in \eqref{eq:ut200}. 
% follows from the sub-martingale property of $\Pi_{k,t}$,
%The first inequality in \eqref{eq:ut200} follows the same argument as in (i), making use of the %
%RHS of   \eqref{eq:ut1}.
% leads to 
\begin{equation}
\label{eq:ut200}
%\Big ( U_k^*= \max_\nu\ex(\Pi_{k, T_k}) \quad \mbox{but} \quad \Big)
\ex(\Pi_{k, T_k}) \ge \ex(\Pi_{k,\tau_k}) \ge \Pi_{k,0} = n_{k,0} P_0.
\end{equation}

To identify the optimal trading strategy $\{\nu^*_{k, j}\}_{j \le T_k - 1}$, we use % of $\max_\nu\ex(\Pi_{k, T_k})$ by 
backward induction (dynamic programming). 
%In the last step, the goal is 
To optimize $\nu_{k, T_k - 1}$, observe
\begin{align*}
& \quad \ex( \delta_k^{T_k} n'_{k, T_k} P_{T_k} - \delta_k^{T_k - 1} \nu_{k, T_k - 1} P_{T_k - 1}  \,|\, \mathcal{G}_{T_k - 1} ) \\
& = \delta_k^{T_k} (n'_{k, T_k - 1} + \nu_{k, T_{k}-1})  \left(1 + N_{T_k - 1}^{-\alpha - 1} \right) \ex(P_{T_k} | \mathcal{G}_{T_k - 1}) - \delta_k^{T_k - 1} \nu_{k, T_k - 1} P_{T_k - 1} \\
& =  \delta_k^{T_k} n'_{k, T_k - 1} N_{T_k - 1}^{-1} \ex(N_{T_k} P_{T_k} | \mathcal{G}_{T_k - 1}) 
+ \delta_k^{T_k - 1} \left(\delta_k N_{T_k - 1}^{-1} \ex( N_{T_k}P_{T_k} | \mathcal{G}_{T_k - 1}) - P_{T_k - 1} \right) \nu_{k, T_k - 1}\\
& = \delta_k^{T_k} n'_{k, T_k - 1} N_{T_k - 1}^{-1} \ex(M_{T_k}  | \mathcal{G}_{T_k - 1}) 
+ \delta_k^{T_k - 1} \left(\delta_k N_{T_k - 1}^{-1} \ex( M_{T_k} | \mathcal{G}_{T_k - 1}) - P_{T_k - 1} \right) \nu_{k, T_k - 1}
\end{align*}
which is linear in $\nu_{k, T_{k-1}}$.
By assumed condition (b) in \eqref{condM}, we have
\begin{equation*}
\delta_k N_{T_k - 1}^{-1} \ex( M_{T_k} | \mathcal{G}_{T_k - 1}) - P_{T_k - 1}
\ge \left( \delta_k (1 + r_{\tiny \mbox{cryp}}) - 1 \right) P_{T_k - 1} \ge 0.
\end{equation*}
Thus, $(\nu^*_{k, T_k -1} \,|\, \mathcal{G}_{T_k-1})  =  N_{T_k- 1} - n'_{k, T_k - 1}$, following the (binding) constraint in (C2). % is binding optimality. 
That is, bidder $k$'s optimal strategy at the penultimate time $T_k - 1$ is to buy all available stakes at that time. 
Going backward, we have $(\nu^*_{k,j} \,|\, \mathcal{G}_{j}) = N_{k,j} - n'_{k,j}$ for $j \ge 1$.
Thus, the optimal trading strategy is $\nu^*_{k,1} = N_1 - n'_{k,1}$, $\nu^*_{k,2} = \cdots = \nu^*_{k, T_k - 1} = 0$.
%(Note in this case we still have $U_k^*= \max_{\nu} \ex(\Pi_{k, T_k})$, but of course $\max_\nu\ex(\Pi_{k, T_k})\ge\ex(\Pi_{k,T_k}) $;
%and the optimal $\nu$ is in general not easily identified.)
%The equality in the first inequality in \eqref{eq:ut200}
%Note that $U_k = \ex(\Pi_{k, T_k})$ if $b_{k,t} = 0$ for all $t$, 
%and the second inequality holds for any feasible $\{\nu_{k,t}\}$ and especially for $\nu_{k,t} = 0$ for all $t$.
%This completes the proof.

(iii) Under the assumed equality $\delta_k(1+r_{\tiny \mbox{cryp}}) = 1$, 
%Under the condition \eqref{condM} and the equality in \eqref{eq:incassump20}, the process 
$\{\Pi_{k,t}\}$ is a $\mathcal{G}_t$-martingale;
hence, the inequality in \eqref{eq:ut2} now holds as equality, i.e.,
% and since 
%$\Pi_{k,0} =\n_{k,0} P_0$ is independent of $\nu$, the maximing $\nu$ on the LHS of the equality can be removed. 
%The  first inequality in \eqref{eq:ut2} remains the same. Hence, we have
%\begin{equation}
%\label{eq:ut20}
$U^*_k = \Pi_{k,0} = n_{k,0}P_0$.
 %\end{equation}
% and the first inequality still holds, and in particular it holds as equality when $b_{k,t} = 0$ for all $t$. 
 %Given, the second equality in \eqref{eq:ut2} holds for 
 Thus. all strategies lead to the optimal utility, including  any feasible strategy (in particular, the no-trading strategy)
  and the non-participation strategy. 
% are as good as the optimal strategy.
%  $\{\nu_{k,t}\}$ and especially for $\nu_{k,t} = 0$ for all $t$, 
%we can conclude that liquidating at time $t = 0$ provides the same (optimal) utility as participating in bidding with (or without) trading,
% both of which achieves the optimality.

The ``moreover'' part of the theorem is immediate.
\end{proof}

\quad In what remains of this section, we make a few remarks on Theorem \ref{thm:3}, 
in particular, to motivate and explain its required conditions. 
First, the rate $r_{\tiny \mbox{cryp}}$ is determined by condition (b), the second equation in  \eqref{condM}.
As such, it should be distinct from $r_{\tiny \mbox{free}}$, the latter being associated with a risk-free asset.
%that underlies the price $\{P_t\}$. 
%Since $M_t=N_tP_t$, and $N_t$ is driven by very different dynamics than $P_t$,
%$r_{\tiny \mbox{cryp}}$ should take a different value. 
For all practical purpose, we can assume $r_{\tiny \mbox{crpt}}\ge r_{\tiny \mbox{free}}$, even though
this is not assumed in the theorem. When this relation does hold, then condition (a) will become
superfluous in cases (i) and (iii).

\quad Second, the discount factor $\delta_k$ in the utility objective in \eqref{eq:OPT}, 
a parameter that measures bidder $k$'s sensitivity towards risk, 
plays a key role in characterizing phase transitions in terms of $\delta_k(1+r_{\tiny{\mbox{cryp}}})$. 
% three cases
%The inequalities in (i) and (ii) and the equality in (iii)
%make $\{\Pi_{k,t}\}$ (and also $\{\delta_k^t M_t\}$) 
%a super-martingale, a sub-martingale and a martingale, respectively.
In case (i), the inequality $\delta_k\le 1/(1+r_{\tiny \mbox{cryp}})$ implies bidder $k$ is seriously risk-averse; 
and this is reflected in $k$'s non-participation strategy.
%since the inequality makes $\{\Pi_{k,t}\}$ a super-martingale, 
%bidder $k$'s utility is bounded from above by $\Pi_{k,0}=n_{k,0} P_0$, which $k$ can 
%achieve by cashing out at $t=0$. giving $k$ no incentive to participate.
In case (ii), the inequality holds in the opposite direction, implying bidder $k$ is lightly risk-averse or even a risk taker.
Accordingly, $k$'s strategy is to aggressively sweep up all the available stakes to reach monopoly, 
and participate (but not trade) until the terminal time.
Also note in this case, the non-participation strategy will provide less (no greater) utility for bidder $k$ than
the ``no-trading'' strategy, and certainly no greater utility than
% where $\tau_k \ge 1$ and $b_{k,t} = \nu_{k,t} = 0$ for $t \ge 1$, let alone 
the buy-out strategy.
%and to pursue the optimal strategy,
%or short of that to follow the no-trading strategy.
%(Note, however, the current analysis cannot conclude whether the no-trading strategy is as good as any feasible trading strategy.)
 %from bidder $k$'s preference.
 %In the latter case, one may also impose $\sum_{k = 1}^K \nu_{k,t} =0$ for all $t$ as a condition on the exchange economy, 
% which we plan to tackle in a future work.}
In case (iii), the inequality becomes an equality $\delta_k = 1/(1+r_{\tiny \mbox{cryp}})$, and $\{\Pi_{k,t}\}$ becomes a martingale.
Consequently, bidder $k$ is indifferent between non-participation and participation, 
and in the latter case, indifferent to all (feasible) strategies, including 
the buy-out (and the no-trading) strategy. Indeed, the  equality $\delta_k = 1/(1+r_{\tiny \mbox{cryp}})$
is both necessary and sufficient for the no-trading strategy.
This equality also has the effect to force all participating bidders to have the same risk sensitivity.

%Regarding the condition in \eqref{condM},  % and \eqref{eq:incassump2}, 
%we could have set 
%$r_{\tiny \mbox{cryp}}=r_{\tiny \mbox{free}}$. 
%That, in fact, is what's assumed 
\quad In contrast, in \cite{RS21}, there is a single rate $r_{\tiny \mbox{free}}$, or equivalently.
$r_{\tiny \mbox{cryp}}=r_{\tiny \mbox{free}}$ is assumed, which seems difficult to justify,
since in most applications (cryptocurrency in particular) $r_{\tiny \mbox{cryp}}$ will be significantly larger than 
$r_{\tiny \mbox{free}}$.
%The more serious drawback is,
Moreover,  there is also a single risk sensitivity for all bidders, which is set at 
 $\delta=1/(1+r_{\tiny \mbox{free}})$. Thus,
%(which tantamount to setting $r_{\tiny \mbox{cryp}}=r_{\tiny \mbox{free}}$)
\cite{RS21} is limited to the martingale case only, 
%address the
%``fair-game'' nature of trading (presented as ``risk neutrality''), and 
reaching the same conclusion as our case (iii), 
that all feasible strategies, buy-out included, yields the same (expected) utility. 
As there is no stopping decision and super- or sub-martingale cases
in \cite{RS21}, non-participation does not come up at all, 
neither do notions like risk-averse or risk-seeking.   

%\quad
%\blue{Moreover, in our more general and richer setting, the phase transitions in Theorem \ref{thm:3} 
%highlighted above indicates that 
%$\delta_k (1 + r_{\tiny \mbox{cryp}}) = 1$ is necessary as well as sufficient for the no-trading strategy.
%That is, bidder $k$ has no incentive to trade if and only if
%\begin{equation*}
%\delta_k (1 + r_{\tiny \mbox{cryp}}) = 1 \quad (\mbox{and} \quad \delta_k(1 + r_{\tiny \mbox{free}}) \le 1).
%\end{equation*}
%So no bidder will have any incentive to trade if and only if $\delta_k =\delta:= (1+r_{\tiny \mbox{cryp}})^{-1}$ for all $k$.}

%\quad
%A related point is the discount factor $\delta_k$ in the utility objective in \eqref{eq:OPT}, 
%a parameter that measures bidder $k$'s 
%sensitivity towards risk.
 %In \cite{RS21} this takes the form of a single parameter %(denoted $\delta$) 
 %independent of $k$, and is set to be equal to the reciprocal of $(1+r_{\tiny \mbox{free}})$. 
% This, together with setting $r_{\tiny \mbox{cryp}}=r_{\tiny \mbox{free}}$,
%implies that all bidders have the same risk sensitivity, and the same sensitivity to
%the risk-free asset as to the stakes (e.g., the cryptocurrency), and in this sense ``risk neutral.''
%In contrast, we allow $\delta_k$ to be different among the bidders, 
%in addition to differentiating $r_{\tiny \mbox{cryp}}$ from $r_{\tiny \mbox{free}}$; 
%and the relation between $\delta_k$ and  
%$r_{\tiny \mbox{cryp}}$ is specified in the second condition in \eqref{condM}.
 %and $r_{\tiny \mbox{free}}$ separately
% in \eqref{eq:incassump2}. 

\quad The last point we want to emphasize is 
 that the two conditions in \eqref{condM} play very different roles.
  As evident from the proof of Theorem \ref{thm:3},  condition (b)
  makes $\{\Pi_{k,t}\}$ a super- or sub-martingale or a martingale,
  according to bidder $k$'s risk sensitivity as specified by the inequalities and equality applied to $\delta_k$
  (along with $r_{\tiny \mbox{free}}$) in the three cases.
  % and these lead to $k$ following  resulting in different strategies.  
  Yet, to solve the maximization problem in \eqref{eq:OPT}, 
  $\{\Pi_{k,t}\}$ needs to be connected to the utility; and this is the role played by
  condition (a), under which it is necessary (for optimality) to set $b_{k,t}=0$ for all $t\ge 1$, 
%so as to achieve $U^*_k =\max_\nu \Pi_{k,T_k}$;  
and applicable to all three cases in Theorem \ref{thm:3}.
 In this sense, condition (a) alone solves half of the maximization problem, the $b_{k,t}$ half of the strategy.  
  In fact, it's more than half, as the optimal $\nu$ strategy is only needed in the sub-martingale case; and even 
  there, condition (a) pins down the fact that to participate (even without trading) is better than non-participation. 

\quad Note that Theorem \ref{thm:3} can be readily extended. 
For instance, the rates $r_{\tiny \mbox{cryp}}(t)$ and $r_{\tiny \mbox{free}}(t)$ can vary over the time. 
In this case, it suffices to modify the conditions in case (i) to 
\begin{equation*}
 \left(1+ \sup_{t < T_k} r_{\tiny \mbox{cryp}}(t)\right) \delta_k \le 1 \quad \mbox{and} \quad \left(1+\sup_{t < T_k} r_{\tiny \mbox{free}}(t)\right) \delta_k \le 1,
\end{equation*}
the conditions in case (ii) to
 \begin{equation*}
\left(1+ \inf_{t < T_k} r_{\tiny \mbox{cryp}}(t)\right) \delta_k \ge 1\quad \mbox{and} \quad \left(1+\sup_{t < T_k} r_{\tiny \mbox{free}}(t)\right) \delta_k \le 1,
\end{equation*}
and the conditions in case (iii) to 
\begin{equation*}
\delta_k = (1 + r_{\tiny \mbox{cryp}})^{-1} \quad \mbox{and} \quad \sup_{t < T_k} r_{\tiny \mbox{free}}(t) \le r_{\tiny \mbox{cryp}}, \quad
\mbox{with } r_{\tiny \mbox{cryp}}  \mbox{ being constant}.
\end{equation*}
Then, Theorem \ref{thm:3} will continue to hold. 
We can also include a processing cost $\kappa > 0$ that any bidder selected by the $\texttt{Poly}(\al)$ mechanism will pay to receive the reward. (This corresponds to the ``mining'' cost to validate the block.) 
In this case, the budget constraint (C1) is modified by 
adding a term $- \kappa 1_{S_{k,t}}$ to the right side of the equation, 
and the same applies to the liquidation constraint (with $t$ replaced by $T_k$).
Condition (b) in \eqref{condM} is modified to 
%\begin{equation}
%c_{k,t} = (1+r_{\tiny \mbox{free}}) b_{k, t-1}- b_{k, t} -  \nu_{k,t}P_t - \kappa 1_{S_{k,t}} , \tag{C1'}
%\end{equation}
%and
%\begin{equation}
%c_{k, T_k} = (1+ r_{\tiny \mbox{free}}) b_{k, T_k - 1} + n'_{k, T_k} P_{T_k} - \kappa 1_{S_{k,T_k}}. \tag{C3'}
%\end{equation}
%Assuming that 
$\ex(M_{t+1} \,|\, \mathcal{G}_t) = (1+r_{\tiny \mbox{cryp}}) M_t + \kappa$.
%and \eqref{eq:incassump2} yields the same conclusion as in Theorem \ref{thm:3}.

\section{Conclusions}
\label{sc5}

\quad We have proposed in this study a new $\pol$ voting rule that is more 
general than the traditional voting rule (which is linear, corresponding to $\al=0$).
%  linear  that  in the context of permissionless blockchain.
More importantly, the $\pol$ voting rule distinguishes voting power from voter share, and hence
decouples the two. 
 
\quad Applying the $\pol$ voting rule to the PoS protocol, where the voters are the bidders
(competing for ``rewards,'' or validation of new blocks), 
we show this decoupling will enhance {security}, a key objective of the PoS protocol.
Specifically, we prove that while bidder shares form a martingale process that will converge to a Dirichlet distribution, 
each bidder's voting power is a super-martingale 
that decreases to zero over time. 
For both limiting results, we explicitly characterize their rate of convergence as well.
Furthermore, we show a phase transition in the stability of bidder shares 
in terms of each bidder's initial share relative to the total in the system.
%The analysis relies on a fine study of the random process of the volume of stales, which is a random walk 
%with state-dependent transition probabilities. 
We also study the issue of a bidder's risk sensitivity when trading is allowed,
and provide conditions under which a bidder will have no incentive to participate in the bidding process, 
or, if participate, will forgo trading.

%There are several directions to extend this work.
\quad In the Introduction, we mentioned two general approaches to enhance security in the PoS protocol:
adjust the amount of reward over time and slow down the voting process; and the current study focuses on the latter while
keeping the reward constant.
It is possible to pursue a combination of both approaches, i.e., adjusting the size of reward dynamically over time
in the same manner as adjusting $\alpha$ (for the latter, refer to \S\ref{sc42} below). 
In another direction, it is also possible to study the trading problem in \S\ref{sc32} under 
a suitable market impact model, where the price process $P_t$ will be impacted by trading activities;
for instance, 
%in the forthcoming work (\cite{DJTY}), 
 a mean-field PoS model with linear impact (and transaction costs). 
 
\bigskip

\textbf{Acknowledgments}:
We thank anonymous referees for helpful suggestions which improve the presentation of the paper. 
W.\ Tang gratefully acknowledges financial support through NSF grants DMS-2113779 and DMS-2206038,
and through a start-up grant at Columbia University.
David Yao's work is part of a Columbia-CityU/HK collaborative project that is supported by 
InnoHK Initiative, The Government of the HKSAR and the AIFT Lab.

%%%%%%%%%%%%%%%%%%%%%%%%%%%%%%%%%%%%%%%%%%%%%%%%%%%%%%%%%%%%%%%%%%%%%%
\section{Appendix}

\subsection{Improvement on $\lambda_{\pm}(\al)$}
\label{sc41}

Theorem \ref{prop:2} proves large-deviation bounds on $N_t$.
However, it does not cover the whole range. 
It remains open to prove such bounds
in the range $(\lambda_-(\alpha), \lambda_+(\alpha))$;
and once proved, the result will also imply the almost sure convergence of $N_t/t^{\frac{1}{1 + \alpha}}$ as $t \to \infty$.

\quad Here we provide a way to (slightly) improve the values of $\lambda_{\pm}(\al)$ in Theorem \ref{prop:2}.
To simplify the presentation, we consider $\al = 1$ (quadratic voting rule) with $\lambda_{-}(1) \approx 0.56$ and 
$\lambda_{+}(1) \approx 2.51$.
The idea relies on a multi-scale analysis by splitting the interval $[0,t]$ into $[0,t/2]$ and $[t/2, t]$,
and the goal is to upper bound $\mathbb{P}(N_t = \lambda \sqrt{t})$ for $\lambda > 0$.
In the sequel, we neglect the polynomial factors and only focus on the exponential terms.
Note that
\begin{equation*}
\pr(N_t = \lambda \sqrt{t}) = \sum_{k \le \lambda \sqrt{t}} \binom{t/2}{k} \binom{t/2}{\lambda \sqrt{t} - k}
\frac{1}{(\lambda \sqrt{t} )!} \underbrace{\left(1 - \frac{1}{k} \right)^{t/2 - k} \left( 1 - \frac{1}{\lambda \sqrt{t}}\right)^{t/2 + k - \lambda \sqrt{t}}}_{(a'')}.
\end{equation*}
Next we split the range of $k \le \lambda \sqrt{t}$ into $S_1:=\{k \le a \sqrt{t}\} \cup \{k \ge (\lambda -a) \sqrt{t}\}$,
and $S_2: = \{a \sqrt{t} < k < (\lambda -a) \sqrt{t}\}$ with $a < \frac{\lambda}{2}$.
For $k \in S_1$, we simply bound the term (a) by $\left( 1 - \frac{1}{\lambda \sqrt{t}}\right)^{t/2  - \lambda \sqrt{t}}$
while for $k \in S_2$ we bound the term ($a''$) by
$\left(1 - \frac{1}{(\lambda - a) \sqrt{t}} \right)^{t/2 - (\lambda - a) \sqrt{t}} \left( 1 - \frac{1}{\lambda \sqrt{t}}\right)^{t/2 - a\sqrt{t}}$.
Consequently,
\begin{multline*}
\pr(N_t = \lambda \sqrt{t}) \le 
\underbrace{\left(\sum_{k \in S_1} \binom{t/2}{k} \binom{t/2}{\lambda \sqrt{t} - k} \right) \frac{1}{(\lambda \sqrt{t} )!} \left( 1 - \frac{1}{\lambda \sqrt{t}}\right)^{t/2  - \lambda \sqrt{t}}}_{(b'')} \\
+ \underbrace{\left(\sum_{k \in S_2} \binom{t/2}{k} \binom{t/2}{\lambda \sqrt{t} - k} \right) \frac{1}{(\lambda \sqrt{t} )!} \left(1 - \frac{1}{(\lambda - a) \sqrt{t}} \right)^{t/2 - (\lambda - a) \sqrt{t}} \left( 1 - \frac{1}{\lambda \sqrt{t}}\right)^{t/2 - a\sqrt{t}}}_{(c'')}.
\end{multline*}
%Recall that 
%\begin{equation*}
%\binom{\alpha t}{\beta \sqrt{t}} \sim \exp \left( \frac{\beta}{2} \sqrt{t} \log t + (\beta \log \alpha + \beta - \beta \log \beta) \sqrt{t} \right).
%\end{equation*}
%Thus,
%\begin{align*}
%& \sum_{k \in S_1} \binom{t/2}{k} \binom{t/2}{\lambda \sqrt{t} - k}   \sim  \exp\left(\frac{\lambda}{2} \sqrt{t} \log t + (-\lambda \log 2 + \lambda -a \log a - (\lambda -a) \log(\lambda -a)) \sqrt{t} \right), \notag \\
%& \sum_{k \in S_2} \binom{t/2}{k} \binom{t/2}{\lambda \sqrt{t} - k}   \sim  \exp\left(\frac{\lambda}{2} \sqrt{t} \log t + (\lambda - \lambda \log \lambda) %\sqrt{t} \right).
%\end{align*}
Using Stirling's formula, we get exponential bounds for the terms ($b''$) and ($c''$):
\begin{equation}
\label{eq:bceq}
\begin{aligned}
& (b'') \sim \exp\left( (-\lambda \log 2 + 2 \lambda - a \log a - (\lambda -a) \log(\lambda - a) - \lambda \log \lambda - \frac{1}{\lambda}) \sqrt{t} \right), \\
& (c'') \sim \exp\left( (2 \lambda - 2 \lambda \log \lambda - \frac{1}{2 \lambda} - \frac{1}{2(\lambda - a)}) \sqrt{t}\right).
\end{aligned}
\end{equation}
By equating the two coefficients before $\sqrt{t}$ in \eqref{eq:bceq}, we have
\begin{equation*}
-\lambda \log 2 + 2 \lambda - a \log a - (\lambda -a) \log(\lambda - a) - \lambda \log \lambda - \frac{1}{\lambda})
= 2 \lambda - 2 \lambda \log \lambda - \frac{1}{2 \lambda} - \frac{1}{2(\lambda - a)}.
\end{equation*}
By letting $a = \theta \lambda$ with $\theta < \frac{1}{2}$, the above equation yields
\begin{equation}
\label{thetaha}
\lambda = \sqrt{\frac{\theta}{2(1 - \theta)(\log 2 + \theta \log \theta + (1- \theta) \log(1 - \theta))}}.
\end{equation}
and the coefficient before $\sqrt{t}$ is
\begin{equation}
\label{flambda}
f(\lambda) = 2 \lambda \log \lambda - 2 \lambda + \frac{1}{2 \lambda} + \frac{1}{2(1 - \theta) \lambda},
\end{equation}
where $\theta$ is specified by \eqref{thetaha}.
By injecting the expression \eqref{thetaha} into \eqref{flambda},
$f$ is a function of $\theta$.
It is easy to see that $f(\theta)$ has only one root on $(0,1/2)$ which is approximately $0.1575$, and 
$\lambda_{-}(1)$ is improved numerically to from $0.56$ to $0.60$.
Similarly, the value of $\lambda_{+}(1)$ is improved numerically from $2.51$ to $2.44$.

\quad We can continue this procedure, for instance to split $[0,t]$ into $[0,t/3]$, $[t/3, 2t/3]$ and $[2t/3, t]$, and so on to get better and better numerical values of $\lambda_{-}(1)$ and $\lambda_{+}(1)$.
However, it is not clear whether this approach will eventually get all the way to the threshold $\sqrt{2} \approx 1.41$.
We conjecture that the exponential deviation holds right off the threshold $(1+\al)^{\frac{1}{1+\al}}$,
which is supported by the numerical experiments; refer to Figure \ref{fig:1}.
%\footnote{Perhaps moving this sentence, along with the figure, to the appendix, to improve 
%the flow? Also, it seems difficult to digest the plots in the figure without reading the appendix. - david} 

\begin{figure}[htb]
    \centering
\begin{subfigure}{0.4\textwidth}
  \includegraphics[width=\linewidth]{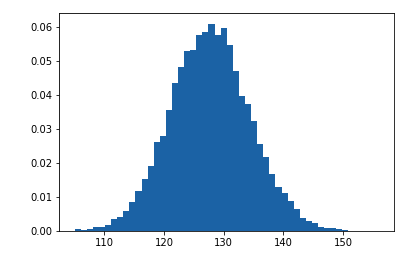}
  \caption{Histogram of $N_{8000}$ on MC simulation of $20000$ samples.}
\end{subfigure}\hfil
\begin{subfigure}{0.4\textwidth}
  \includegraphics[width=\linewidth]{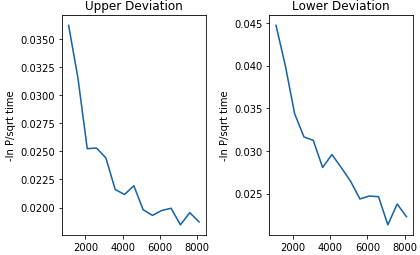}
  \caption{$x$-axis: $t \in \{1000, 1500, \ldots, 8000\}$;
$y$-axis: $-\ln \pr(N_t > \sqrt{2.2\, t})/\sqrt{t}$ (left) and $-\ln \pr(N_t < \sqrt{1.8 \, t})/\sqrt{t}$ (right)
on MC simulation of $20000$ samples.}
\end{subfigure}\hfil
\caption{Volume of stakes $N_t$ with $N_0 = 5$ and $\al = 1$ (quadratic voting).}
\label{fig:1}
\end{figure}

\subsection{Control of voting powers}
\label{sc42} 

As proved in Theorem \ref{thm:1}, the reward rate $\theta_{k,t}$ decays at rate $\Theta(t^{-\frac{\alpha}{1 + \alpha}})$.
If the reward is associated with the validation of a new block, then 
the duration between two consecutive validations (called ``block time'' below) 
will increase (and uncontrolled) over time. 
For instance, 
set $\alpha = 1$ (quadratic voting rule), and $T = 10^7$ seconds ($\approx 4$ months).
Then, the duration required to see the next block at time $T$ is approximately
\begin{equation*}
10 \mbox{ seconds } \times (10^7/10)^{\frac{1}{2}} = 10^4 \mbox{ seconds } \approx 3 \mbox{ hours},
\end{equation*}
which is even much longer than the $10$ minutes block time of Bitcoin.
(The block time is $10$ seconds in Ethereum, see e.g. \cite{BV14}.)

\quad One possible (and practical) solution 
% to solve the problem of ever-growing block-generation time (when the reward is coupled with block validation)
 is to dynamically ``tune'' the parameter $\alpha$ over time. 
Specifically, let $\kappa$ denote a threshold for the expected number of rounds of 
bidding/voting between two validated blocks. 
Then,
\begin{itemize}[itemsep = 3 pt]
\item
set $\alpha = \alpha_0 > 0$, and apply the $\texttt{Poly}(\alpha_0)$ scheme up to round $\kappa^{1 + \alpha_0^{-1}}$;
\item
set $\alpha = \alpha_1 < \alpha_0$, % starting from round $\kappa^{1 + \alpha_0^{-1}}$, 
and apply the $\texttt{Poly}(\alpha_1)$ scheme up to round $\kappa^{1 + \alpha_1^{-1}}\ldots$ and so on.
\end{itemize}
Here $\kappa, \alpha_0, \alpha_1, \ldots$ are user-defined hyper-parameters. 
To illustrate, by setting $\kappa = 50$ rounds ($\approx 10$ minutes in Ethereum) and $\alpha_k = (1+k)^{-1}$ for $k \ge 0$,
\begin{itemize}[itemsep = 3 pt]
\item[-]
Apply the $\texttt{Poly}(1)$ scheme up to round $50^2 \approx 7$ hours;
\item[-]
Apply the $\texttt{Poly}(1/2)$ scheme up to round $50^3 \approx 2$ weeks;
\item[-]
Apply the $\texttt{Poly}(1/3)$ scheme up to round $50^4 \approx 2$ years;
\item[-]
Apply the $\texttt{Poly}(1/4)$ scheme up to round $50^5 \approx 100$ years $\ldots$ and so on.
\end{itemize}
Similarly, by setting $\kappa = 5$ rounds ($\approx 1$ minutes in Ethereum),
\begin{itemize}[itemsep = 3 pt]
\item[-]
Apply the $\texttt{Poly}(1)$ scheme up to round $5^2 \approx 4$ minutes;
\item[-]
Apply the the $\texttt{Poly}(1/2)$ scheme up to round $5^3 \approx 20$ minutes; $\cdots$
\item[-]
Apply the $\texttt{Poly}(1/10)$ scheme up to round $5^{11} \approx 15$ years $\ldots$, and so on.
\end{itemize}

\quad It is also possible to tune the parameter $\alpha$ at random time points adaptive to the reward rate. 
That is,
\begin{itemize}[itemsep = 3 pt]
\item
Set $\alpha = \alpha_0 > 0$, and apply the $\texttt{Poly}(\alpha_0)$ scheme up to round $k_0$ 
where $k_0$ is the first time by which no new block is validated in $\kappa$ rounds;
\item
Set $\alpha = \alpha_1 < \alpha_0$, 
and apply the $\texttt{Poly}(\alpha_0)$ scheme up to round $k_1$ 
where $k_1$ is the first time by which no new block is validated in $\kappa$ rounds since then $\ldots$ and so on.
\end{itemize}
Note that in either case the process of stakes is a time-changed P\'olya urn,
so the results in Section \ref{sc3} continue to hold 
(except that the convergence rate will depend on the choice of $\{\alpha_k\}$).

% Enter the text of acknowledgments here

% References here (outcomment the appropriate case) 

% CASE 1: BiBTeX used to constantly update the references 
%   (while the paper is being written).
\bibliographystyle{abbrvnat} % outcomment this and next line in Case 1
\bibliography{unique.bib} % if more than one, comma separated

% CASE 2: BiBTeX used to generate mypaper.bbl (to be further fine tuned)
%\input{mypaper.bbl} % outcomment this line in Case 2

\end{document}